\documentclass[11pt,
,a4paper]{article} 
\pdfoutput=1 
\usepackage{amsmath}
\usepackage{amssymb}
\usepackage{amsthm}
\usepackage{verbatim}
\usepackage{enumerate}
\usepackage{color}
\usepackage{blkarray}
\usepackage{bm}

\usepackage{tikz}
\usetikzlibrary{cd}
\usepackage{graphicx,amscd,mathrsfs}

\usepackage[top=30truemm,bottom=30truemm,left=30truemm,right=30truemm]{geometry}

\usepackage[affil-it]{authblk}

\theoremstyle{definition}
\newtheorem{definition}{Definition}[section]
\newtheorem{remark}[definition]{Remark}

\theoremstyle{plain}
\newtheorem{theorem}[definition]{Theorem}
\newtheorem{lemma}[definition]{Lemma}
\newtheorem{proposition}[definition]{Proposition}
\newtheorem{corollary}[definition]{Corollary}

\usepackage{mathtools}
\mathtoolsset{showonlyrefs,showmanualtags}

\usepackage{comment}
\usepackage{color}

\theoremstyle{remark}

\numberwithin{equation}{section}

\def\ds{\displaystyle}

\DeclareMathOperator{\im}{Im}
\DeclareMathOperator{\Ker}{Ker}

\def\bbC{\mathbb{C}}

\def\calL{\mathcal{L}}
\def\calK{\mathcal{K}}

\title{$q$-middle Laplace transform and its applications}
\date{}

\author[1]{Shunya Adachi} 
\author[2]{Takahiko Nobukawa}
\affil[1]{Cooperative Faculty of Education, Utsunomiya University\linebreak
  350 Mine-machi, Utsunomiya-shi, Tochigi, 321-8505 JAPAN\linebreak
 email: {\tt sadachi@a.utsunomiya-u.ac.jp}}
\affil[2]{Department of Mathematical Sciences, Aoyama Gakuin University\linebreak
  5-10-1 Fuchinobe, Chuo-ku, Sagamihara-shi, Kanagawa, 252-0206 JAPAN\linebreak
 email: {\tt nobukawa@math.aoyama.ac.jp}}

\usepackage[pdftex]{hyperref} 
\hypersetup{
	bookmarksnumbered=true,
	colorlinks=true,
	linkcolor=blue, 
	citecolor=blue,
	urlcolor=blue
}
\begin{document}
\maketitle
\allowdisplaybreaks

\begin{abstract}

We introduce a $q$-analog of the one-variable middle Laplace
transform defined by the first author.
We establish its invertibility and preservation of irreducibility,
and derive a decomposition of $q$-middle convolution
in terms of this transform.

As applications, we obtain integral representations of solutions to the confluent $q$-hypergeometric equation and its variant, and construct a transformation of the $2\times 2$ Lax linear equation associated with the $q$-Painlev\'{e} equation of type $A_4^{(1)}$.

\end{abstract}

{\renewcommand{\thefootnote}{}%
	\footnote{MSC 2020: 39A13, 34M55, 33D60, 44A10
		
		Key words: $q$-Laplace transform, $q$-middle convolution, $q$-hypergeometric equation, $q$-Painlev\'e equation}
	\addtocounter{footnote}{-1}}

\section{Introduction}\label{sec:intro}

Since the groundbreaking work of N.~M.~Katz \cite{KatzBook} on rigid local systems,
the study of differential equations in the complex domain
and special functions has advanced considerably.
A central notion of Katz's theory is middle convolution. 
This was originally introduced as an operation on local systems, and Dettweiler and Reiter \cite{DR2007} later formulated it for Fuchsian systems of linear ordinary differential equations:
    \begin{equation}
        \frac{dY}{dx}=\left(\sum_{i=1}^{m}\frac{A_i}{x-t_i}\right)Y, \quad 
        (A_{i}\in\mathrm{Mat}(n,\bbC)).
    \end{equation}
In particular, this operation is closely related to the Euler-type integral transform
\begin{equation}
    f(x)~\mapsto~
    \int_{\Delta}f(t)(x-t)^{\lambda}\,dt
    \quad(\lambda\in\bbC)
\end{equation}
of solutions. 
Middle convolution has several important properties and applications,
including the description of monodromy representations,
the study of (ir)reducibility conditions,
the Deligne--Simpson problem,
the classification of Fuchsian differential equations,
and isomonodromic deformations
(see, e.g., \cite{HaraokaFilipuk2007,Oshima2012,HaraokaBook}).

As a $q$-analog of middle convolution,
H.~Sakai and M.~Yamaguchi \cite{SakaiYamaguchi}
introduced \emph{$q$-middle convolution}
for systems of linear $q$-difference equations
with rational coefficients:
    \begin{equation}
        Y(qx)=\left(B_{\infty}+\sum_{i=1}^{m}\frac{B_{i}}{1-\frac{x}{t_i}}\right)Y(x),
        \quad 
        (B_{\infty},B_i\in\mathrm{Mat}(n,\bbC)),
    \end{equation}
where $q\in\bbC$ and $0<|q|<1$.
Their $q$-middle convolution is closely related to the Euler-type Jackson integral transform
    \begin{equation}\label{eq:SYJackson}
        f(x)~\mapsto~
        \int_{0}^{\infty} f(s)\frac{(q^{\lambda+1}s/x)_{\infty}}{(qs/x)_{\infty}}\,d_q s
        \quad (\lambda\in\bbC)
    \end{equation}
of solutions, where $(a)_{\infty}=\prod_{i=0}^{\infty}(1-aq^i)$.
They also introduced the notion of Fuchsian type
and defined spectral types and indices of rigidity
for $q$-difference systems, and showed that $q$-middle convolution preserves Fuchsian type and, under suitable conditions, irreducibility and the index of rigidity.

As with middle convolution for differential equations, $q$-middle convolution has various applications
to the study of $q$-difference equations and $q$-special functions. 
Sasaki, Takagi and Takemura \cite{SasakiTakagiTakemura2022} applied the $q$-middle convolution to Jimbo--Sakai's Lax linear equation \cite{JimboSakai1996} for the $q$-Painlev\'e equation of type $D_5^{(1)}$.
More precisely, they constructed a transform preserving the form of the Lax equation, and investigated the $q$-middle convolution in terms of affine Weyl group symmetry.
Their work can be viewed as a $q$-analog of Filipuk's result \cite{Filipuk2006} for the Painlev\'e VI equation.
Subsequently, Arai and Takemura \cite{AraiTakemura2023}
reformulated the Jackson integral transformations
associated with $q$-middle convolution
by introducing an additional parameter
and investigating their convergence.
They applied these transformations to obtain
integral representations of solutions to Heine's $q$-hypergeometric equation and its variants.

In a later paper, Arai and Takemura \cite{AraiTakemura2026} reformulated the $q$-middle convolution itself. 
Their reformulation applies to systems of the form
\begin{equation}\label{eq:qDiff}
D_{x}Y=\left(\sum_{i=1}^{m}\frac{B_{i}}{x-t_i}\right)Y
\quad (B_i\in\mathrm{Mat}(n,\bbC)),
\end{equation}
which can be rewritten in the form \eqref{eq:SYeqn}.
Here $D_{x}$ denotes the $q$-derivative operator defined by
\[
D_{x}f(x)=\frac{f(qx)-f(x)}{x(q-1)}
\]
for a function $f(x)$.
Their formulation is associated with
Jackson integral transformations of the form
   \begin{equation}\label{eq:ATJackson}
        f(x)~\mapsto~
        x^{-\lambda}\int_{0}^{\xi\infty} f(s)\frac{(q^{\lambda+1}s/x)_{\infty}}{(qs/x)_{\infty}}\,d_q s 
        \quad (\lambda\in\bbC,~\xi\in\bbC\setminus\{0\}).
    \end{equation}
One advantage of their formulation is the additivity
of $q$-middle convolution: writing $mc_{\lambda}$
for their $q$-middle convolution, we have
\begin{equation}
    mc_{\lambda}\circ mc_{\mu}
    = mc_{\lambda+\mu}
\end{equation}
under suitable conditions, where equality is understood up to equivalence.
They also applied their construction to
third-order linear $q$-difference equations,
including generalized $q$-hypergeometric equations,
and obtained $q$-integral representations
of their solutions.
Using their formulation, the second author \cite{Nobukawa2026} constructed a rank $3$ $q$-difference system admitting affine Weyl group symmetry of type $E_8^{(1)}$.
A generator of $W(E_8^{(1)})$ is expressed in terms of the $q$-middle convolution.
We note that the $q$-middle convolution in \cite{Nobukawa2026} is constructed via the $q$-Okubo type equation.
A similar procedure is discussed in Section \ref{sec:qML}.

\medskip

Recently, the first author \cite{Adachi2026arXiv} introduced the middle Laplace
transform for linear Pfaffian systems with irregular singularities
and established its fundamental properties including a relation to
middle convolution.
In this paper, we introduce the $q$-middle Laplace transform
as a $q$-analog of the one-variable case of this transform.
We describe its action on solutions in terms of Jackson integral
transforms and establish a decomposition of $q$-middle convolution.
We then apply this construction to $q$-special functions and a Lax linear equation for a $q$-Painlev\'{e} equation.

Returning to the differential setting, we observe that the
middle Laplace transform for the systems considered here
coincides with Harnad duality
(see Remark~\ref{rem:Harnad}).
This duality was introduced by Harnad \cite{Harnad1994} in the study
of isomonodromic deformations and is related to the
Fourier--Laplace transform
(cf. \cite{SanguinettiWoodhouse2004,Yamakawa2016}).
Boalch \cite{Boalch2005,Boalch2009} pointed out the relation between Harnad duality and middle convolution. 
Subsequently, Yamakawa \cite{Yamakawa2011} developed a systematic formulation of
Harnad duality, established its invertibility and preservation of
irreducibility.
He also used this framework to generalize
middle convolution to systems of ordinary differential equations with irregular singularities.
The $q$-middle Laplace
transform can therefore be viewed as a $q$-analog of Harnad duality
for systems whose coefficient matrices have only simple finite
poles and a semisimple constant term.
A detailed study from this viewpoint will be presented
in a subsequent paper.

The organization of this paper is as follows.
In Section \ref{sec:qML}, we introduce the $q$-middle Laplace transform for linear $q$-difference systems and establish its properties (invertibility and preservation of irreducibility).
In Section \ref{sec:qmc}, after reviewing the $q$-middle convolution, we show that the $q$-middle convolution can be decomposed using the $q$-middle Laplace transform.
In the subsequent sections, we consider applying the $q$-middle Laplace transform to $q$-special functions. 
In Section \ref{sec:qHGF}, we give Euler--Laplace type integral solutions for the confluent $q$-hypergeometric equation and its variant \cite{matsunawa2019variants} using the $q$-middle Laplace transform.
In Section \ref{sec:qPainleve}, we construct a transform of Murata's Lax linear system \cite{Murata} for the $q$-Painlev\'e equation of type $A_4^{(1)}$.
In Section \ref{sec:summary}, we summarize this paper and discuss related topics.

We fix some notations. 
Sometimes we express $D_{x}$ as $D_{x}^{+}$. 
For a function $f(x)$, we also define operators $D_{x}^{-}$ and $T_{x}$ as
\begin{equation}
	D_{x}^{-}f(x):=\frac{f(q^{-1}x)-f(x)}{x(q^{-1}-1)},
    \quad 
    T_{x}f(x)=f(qx).
\end{equation}
Throughout this paper, we assume $q\in\bbC$ and $0<|q|<1$.
We also define the $q$-shifted factorial
    \begin{equation}
        (a)_{\infty}:=\prod_{i=0}^{\infty}(1-aq^i), \quad 
        (a)_{n}:=\frac{(a)_\infty}{(aq^{n})_{\infty}}
    \end{equation}
for $a\in\bbC$ and $n\in\mathbb{Z}$.
We fix a branch of $\log q$ and define
    \begin{equation}
        q^{\lambda}:=e^{\lambda \log q} \quad (\lambda\in\bbC).
    \end{equation}

\section{$q$-middle Laplace transform}\label{sec:qML}
In this section, we introduce the $q$-middle Laplace transform for systems of linear $q$-difference equations of the form
	\begin{equation}\label{eq:Eqn}
	D_{x}u=\left(A_{\infty}+\sum_{i=1}^{m}\frac{A_{i}}{x-t_i}\right)u
    \quad 
    (A_\infty,A_i\in\mathrm{Mat}(n,\bbC)),
	\end{equation}
where $t_i\neq t_j$ ($i\neq j$). 
In particular, we assume $A_{\infty}$ is a diagonal matrix 
	\begin{equation}\label{eq:Ainf}
	A_{\infty}=\begin{pmatrix}
	\tau_1 I_{n_1} & & & \\
	& \tau_2 I_{n_2} & & \\
	 & & \ddots & \\
	 & & & \tau_{l} I_{n_l}
	\end{pmatrix}
	\end{equation}
where $\tau_{i}\neq \tau_{j}$ ($i\neq j$) and $n_1+n_2+\dots+n_{l}=n$.
\subsection{Definition of the $q$-middle Laplace transform}

For the equation \eqref{eq:Eqn}, we set 
	\begin{equation}\label{eq:vecU}
	U(x):=\left(\frac{u(x)}{x-t_1},\frac{u(x)}{x-t_2},\ldots,\frac{u(x)}{x-t_m}\right)^T.
	\end{equation}
Then, the vector satisfies
	\begin{equation}\label{eq:extOkubo}
	(qx-S)D_{x}U=\left(A-I+A_{\infty}^{\oplus m}(x-S)\right)U,
	\end{equation}
where 
	\begin{equation}\label{eq:qOkubocoeff}
	\begin{aligned}
	&S=\begin{pmatrix}
	t_1 I_n & & & \\
	& t_2 I_n & & \\
	& & \ddots & \\
	& & & t_{m} I_n 
	\end{pmatrix}, 
	\quad 
	A=\begin{pmatrix}
	A_{1} & A_2 & \cdots & A_{m} \\
	A_{1} & A_2 & \cdots & A_{m} \\
	\vdots & \vdots & & \vdots \\
	A_{1} & A_2 & \cdots & A_{m}
	\end{pmatrix}, \\
	&A_{\infty}^{\oplus m}=
	\bigoplus_{i=1}^{m}A_{\infty}=
	\begin{pmatrix}
	A_{\infty} & & & \\
	& A_{\infty} & & \\
	& & \ddots & \\
	& & & A_{\infty}
	\end{pmatrix}.
	\end{aligned}
	\end{equation}
Note that equation \eqref{eq:extOkubo} is a system with polynomial coefficients. 
We call a system of this form a $q$-Okubo type equation.
For $A_\infty=O$, this agrees with the definition
in \cite{Nobukawa2026}.

For the $q$-Okubo type equation \eqref{eq:extOkubo}, we apply the $q$-Laplace transform $L_{q}$ at the level of operators:
\begin{equation}\label{eq:qLaplaceop}
L_{q}:\begin{cases}
D_{x}~&\mapsto~x, \\
~x~&\mapsto~ -q^{-1}T_{x}^{-1}D_{x}=-q^{-1}D_{x}^{-}.
\end{cases}
\end{equation}
Applying this transform, we transform equation \eqref{eq:extOkubo} into
    \begin{equation}
    (-D_{x}^{-}-S)(xV)=\left(A-I+A_{\infty}^{\oplus m}(-q^{-1}D_{x}^{-}-S)\right)V.
    \end{equation}
Noting the $q$-Leibniz rule
\begin{equation}
D_{x}^{-}(xV)=\frac{x}{q}(D_{x}^{-}V)+D_{x}^{-}(x)V=\frac{x}{q}(D_{x}^{-}V)+V,
\end{equation}
we have
    \begin{equation}\label{eq:qOkuboLap}
    \left(x-A_{\infty}^{\oplus m}\right)D_{x}^{-}V=-q\left(A+(x-A_{\infty}^{\oplus m})S\right)V.
    \end{equation}
By multiplying $(x-A_{\infty}^{\oplus m})^{-1}$ from the left, we obtain
    \begin{equation}\label{eq:qLapeq}
    D_{x}^{-}V=q\left(-S+\sum_{j=1}^{l}\frac{B_j}{x-\tau_{j}}\right)V,
    \end{equation}
where 
	\begin{equation}\label{eq:Bj}
	B_{j}=-\begin{pmatrix}
	E_{n_j}A_{1} & E_{n_j}A_{2} & \cdots & E_{n_j}A_{m}\\
	E_{n_j}A_{1} & E_{n_j}A_{2} & \cdots & E_{n_j}A_{m}\\
	\vdots & \vdots & & \vdots \\
	E_{n_j}A_{1} & E_{n_j}A_{2} & \cdots & E_{n_j}A_{m}
	\end{pmatrix}
	\end{equation}
and 
	\begin{equation}
	E_{n_j}=\mathrm{diag}[O_{n_1},\ldots,I_{n_j},\ldots,O_{n_l}]
	=\begin{pmatrix}
	O_{n_1} & & & &\\
	& \ddots & & &\\
	& & I_{n_j} & &  \\
	& & & \ddots & \\
	& & & & O_{n_l}
	\end{pmatrix}
	\end{equation}
for $1\le j\le l$.
\begin{definition}\label{def:qLaplace}
We call the operation that sends the equation \eqref{eq:Eqn} to the equation \eqref{eq:qLapeq} the \emph{$q$-Laplace transform} and denote it by $\calL_{q}^{+}$ or $\calL_{q}$ simply. 
\end{definition}
We then define the $q$-middle Laplace transform. 
Let us define the subspace
    \begin{equation}\label{eq:K}
        \calK:=\left\{
        \begin{pmatrix}
        v_1 \\
        v_2 \\
        \vdots\\
        v_{m}
        \end{pmatrix} 
        \in (\bbC^n)^m
        ~
        \middle|~ 
        v_i \in \Ker A_i ~~(1\le i \le m)\right\}.
    \end{equation}
We can easily verify that $\calK$ is invariant for the action of $(S,B_1,\ldots,B_l)$.
Hence, $(S,B_1,\ldots,B_l)$ induces the linear actions on the quotient space $\bbC^{mn}/\calK$.
We denote the actions by $(\bar{S},\bar{B}_1,\ldots,\bar{B}_l)$ and consider the equation 
    \begin{equation}\label{eq:qmLeq}
        D_{x}^{-}v=q\left(-\bar{S}+\sum_{j=1}^{l}\frac{\bar{B}_j}{x-\tau_{j}}\right)v.
    \end{equation}
\begin{definition}\label{def:qmiddleLaplace}
We call the operation that sends the equation \eqref{eq:Eqn} to the equation \eqref{eq:qmLeq} the \emph{$q$-middle Laplace transform} and denote it by $\mathcal{ML}_{q}^{+}$ or $\mathcal{ML}_{q}$ simply. 
\end{definition}
\begin{remark}\label{rem:projection}
    We can obtain matrix representations of $(\bar{S},\bar{B}_{1},\dots,\bar{B}_{l})$ as follows.
    Let $\{v_1,v_2,\dots,v_k\}$ be a basis of $\calK$.
    For this basis, we add some vectors $w_{k+1},w_{k+2},\dots,w_{mn}$ so that $\{v_1,\ldots,v_{k},w_{k+1},\dots,w_{mn}\}$ gives a basis of $\bbC^{mn}$. 
    By setting $P=(w_{k+1},\dots,w_{mn},v_1,\ldots,v_{k})\in\mathrm{GL}(mn,\bbC)$, we have
    \begin{equation}
        SP=P\left(
        \begin{array}{c|c} 
        \bar{S} & O_{mn-k,k} \\
        \hline 
        * & *\end{array}\right), 
        \quad 
       B_jP=P\left(\begin{array}{c|c} 
        \bar{B}_j & O_{mn-k,k} \\
        \hline 
        * & *\end{array}\right) \quad (1\le j\le l).
    \end{equation}
\end{remark}
\begin{remark}\label{rem:qmLzero}
    By convention, we allow the quotient space in the definition of the $q$-middle Laplace transform to be the zero vector space. 
    The corresponding rank-zero system is called the zero system and is not regarded as irreducible.
    The same convention applies to the inverse $q$-middle Laplace transform defined below.
\end{remark}
We shall explain the analytical meaning of the $q$-middle Laplace transform briefly.
To do this, we introduce the Jackson integral of a function $f(s)$ as follows. 
    \begin{align}
    &\int_{[0,\xi\infty]}f(s)\,d_{q}s=\int_{0}^{\xi\infty}f(s)\,d_{q}s:=
        (1-q)\sum_{n=-\infty}^{\infty}f(\xi q^n)\xi q^n, \\
    &\int_{[0,\xi]}f(s)\,d_{q}s=\int_{0}^{\xi}f(s)\,d_{q}s:=
        (1-q)\sum_{n=0}^{\infty}f(\xi q^n)\xi q^n, \\
&\int_{[\xi_1,\xi_2]}f(s)\,d_{q}s=\int_{\xi_1}^{\xi_2}f(s)\,d_{q}s=\int_{[0,\xi_2]}f(s)\,d_{q}s-\int_{[0,\xi_1]}f(s)\,d_{q}s.
    \end{align}
Here $\xi,\xi_1,\xi_2\in\bbC$. 
Then, we see that the $q$-Laplace transform \eqref{eq:qLaplaceop} for operators can be realized by the Jackson integral transform for $U(x)$: 
    \begin{equation}\label{eq:V(x)int}
    V(x)=\int_{C}U(s)(q(1-q)xs)_{\infty}\,d_{q}s
    \end{equation}
where the path of integration $C$ is taken so that  
    \begin{equation}
        \left[U(s)((1-q)xs)_{\infty}\right]_{s\in \partial C}=0.
    \end{equation}
Here we define
		\begin{align}
			&[f(s)]_{s\in \partial [\tau_1,\tau_2]}:=f(\tau_2)-f(\tau_1),\\
			&[f(s)]_{s\in \partial (a_1C_1+\cdots+a_mC_m)}:=a_1[f(t)]_{t\in\partial C_1}+\cdots +a_m[f(t)]_{t\in \partial C_m}
		\end{align}
and understand $\displaystyle f(\tau\infty)=\lim_{n\to-\infty}f(\tau q^n)$.
Then, the integral \eqref{eq:V(x)int} gives a solution to \eqref{eq:qLapeq}.
Lastly, setting
\begin{equation}
    V(x)=P\left(\begin{array}{c}
        v(x) \\ \hline
        *
    \end{array}
    \right)
\end{equation}
using $P\in\mathrm{GL}(mn,\bbC)$ appearing in Remark \ref{rem:projection}, we obtain the solution to the $q$-middle Laplace transformed equation \eqref{eq:qmLeq}.

In the same way, we can also define the \emph{inverse $q$-middle Laplace transform} for $q$-difference equations of the form
    \begin{equation}\label{eq:downEqn}
        D_{x}^{-}u=\left(A_{\infty}+\sum_{i=1}^{m}\frac{A_i}{x-t_i}\right)u.
    \end{equation}
For this equation, we set the $mn$-vector $U(x)$ by \eqref{eq:vecU}. 
Then we have 
    \begin{equation}\label{eq:downOkubo}
        (q^{-1}x-S)D_{x}^{-}U=\left(A-I+A_{\infty}^{\oplus m}(x-S)\right)U
    \end{equation}
where $S$, $A$, and $A_{\infty}^{\oplus m}$ are given in \eqref{eq:qOkubocoeff}.
For this equation, we apply the inverse $q$-Laplace transform $L^{-}_{q}$
    \begin{equation}\label{eq:invqLaplaceop}
    L^{-}_{q}:\begin{cases}
    ~x~&\mapsto~ D_{x}, \\
    D_{x}^{-}~&\mapsto~-qx.
    \end{cases}
    \end{equation}
Then we have
    \begin{equation}
    \left(q^{-1}D_{x}-S\right)(-qxW)=\left(A-I+A_{\infty}^{\oplus m}(D_{x}-S)\right)W.
    \end{equation}
Using the $q$-Leibniz rule
    \begin{equation}
    D_{x}(xW)=qx(D_{x}W)+W,
    \end{equation}
we have 
    \begin{equation}
   \left(qx+A_{\infty}^{\oplus m}\right)D_{x}W=-\left(A-(qx+A_{\infty}^{\oplus m})S\right)W.
    \end{equation}
By multiplying $(qx+A_{\infty}^{\oplus m})^{-1}$ from the left, we obtain
    \begin{equation}\label{eq:invqmLeq}
    D_{x}W=\left(S+\sum_{j=1}^{l}\frac{B_j/q}{x+\tau_{j}/q}\right)W,
    \end{equation}
where $B_j$ is given in \eqref{eq:Bj}. 
\begin{definition}\label{def:qinvLaplace}
We call the operation that sends the equation \eqref{eq:downEqn} to the equation \eqref{eq:invqmLeq} the \emph{inverse $q$-Laplace transform} and denote it by $\calL_{q}^{-}$. 
\end{definition}
As explained above, the matrices $(S,B_{1},\ldots,B_{l})$ induce the linear actions $(\bar{S},\bar{B}_{1},\ldots,\bar{B}_{l})$ on the quotient space $\bbC^{mn}/\calK$, where $\calK$ is given by \eqref{eq:K}. 
By using them, we consider the equation
    \begin{equation}\label{eq:qinvmiddleLaplace}
        D_{x}w=\left(\bar{S}+\sum_{j=1}^{l}\frac{\bar{B}_{j}/q}{x+\tau_j/q}\right)w.
    \end{equation}
\begin{definition}\label{def:qinvmiddleLaplace}
We call the operation that sends the equation \eqref{eq:downEqn} to the equation \eqref{eq:qinvmiddleLaplace} the \emph{inverse $q$-middle Laplace transform} and denote it by $\mathcal{ML}_{q}^{-}$. 
\end{definition}
We can verify that the Jackson integral transform
    \begin{equation}\label{eq:W(x)int}
        W(x)=\int_{C} \frac{U(s)}{((1-q)xs)_{\infty}}\,d_{q}s
    \end{equation}
realizes the inverse $q$-Laplace transform \eqref{eq:invqLaplaceop} for operators. 
Here the path of integration $C$ is taken so that 
    \begin{equation}
        \left[U\left(\frac{s}{q}\right)\frac{1}{((1-q)sx)_{\infty}}\right]_{s\in \partial C}=0.
    \end{equation}
Thus, the analytical meaning of the inverse $q$-middle Laplace transform can be described in the same way as above, using \eqref{eq:W(x)int} instead of \eqref{eq:V(x)int}. 

\medskip

We note that the $q$-middle Laplace transform $\mathcal{ML}_{q}$ sends equations of the form \eqref{eq:Eqn} to equations of the form \eqref{eq:downEqn}, while the inverse $q$-middle Laplace transform $\mathcal{ML}_{q}^{-}$ sends equations of the form \eqref{eq:downEqn} to equations of the form \eqref{eq:Eqn}.
It is natural to expect that the inverse $q$-middle Laplace transform is indeed inverse to the $q$-middle Laplace transform.
In the next subsection, we shall show that this expectation is justified under a suitable condition.

\subsection{Properties of $q$-middle Laplace transform}
In this section, we show some fundamental properties of the $q$-middle Laplace transform. 
Before this, we introduce the notion of irreducibility and an equivalence relation for linear $q$-difference equations.
\begin{definition}
    A linear $q$-difference equation \eqref{eq:Eqn} or \eqref{eq:downEqn} is \emph{irreducible} if there is no common invariant subspace of the matrices $(A_{\infty},A_1,A_2,\dots,A_{m})$ except $\mathbb{C}^n$ and $\{0\}$. 
    An equation of the form \eqref{eq:Eqn} or \eqref{eq:downEqn} is called \emph{exceptional} if $n=1$ and $A_i=O$ for all $i=1,2,\ldots,m$. 
    An equation which is not exceptional is called \emph{non-exceptional}.
\end{definition}
An exceptional equation corresponds to a scalar equation $D_{x}^{\pm} u=\alpha u$ ($\alpha\in\bbC$). 
\begin{definition}
    Two linear $q$-difference equations
    \begin{equation}\label{eq:equivA}
        D_{x}^{\pm}u=\left(A_{\infty}+\sum_{i=1}^{m}\frac{A_{i}}{x-t_i}\right)u\qquad (A_{\infty},\,A_{i}\in\mathrm{Mat}(n,\bbC))
     \end{equation}
     and
     \begin{equation}\label{eq:equivB}
        D_{x}^{\pm}v=\left(B_{\infty}+\sum_{i=1}^{m}\frac{B_{i}}{x-t_i}\right)v
        \qquad (B_{\infty},\,B_{i}\in\mathrm{Mat}(n,\bbC))
    \end{equation}
    are \emph{equivalent} if there exists a constant invertible matrix $P\in\mathrm{GL}(n,\bbC)$ such that the gauge transform
    $u=Pv$ sends the equation \eqref{eq:equivA} to \eqref{eq:equivB}.
    That is,
    \begin{equation}\label{eq:equiv}
        B_{\infty}=P^{-1}A_{\infty}P, \quad B_{i}=P^{-1}A_{i}P
        \qquad
        (1\le i\le m)
    \end{equation}
    holds. 
\end{definition}
We note that the equation which is equivalent to an irreducible equation is also irreducible. 
Then, the following holds. 
\begin{theorem}\label{thm:qMLproperty}
    Suppose that the equation \eqref{eq:Eqn} is irreducible and non-exceptional. 
    Then the following hold.
    \begin{itemize}
        \item The resulting equation of the composition $\mathcal{ML}_{q}^{-}\circ\mathcal{ML}_{q}^{+}$ for \eqref{eq:Eqn} 
    is equivalent to the original equation \eqref{eq:Eqn}. 
    Namely, it holds that
    \begin{equation}
    \mathcal{ML}_q^{-}\circ\mathcal{ML}_q^{+}=\mathrm{id},
    \end{equation}
    where the symbol $``="$ is understood up to equivalence. 

    \item The resulting equation of $\mathcal{ML}_{q}^{+}$ for \eqref{eq:Eqn} is also irreducible and non-exceptional.
    \end{itemize}
Similar statements also hold for the inverse $q$-middle Laplace transform of the irreducible and non-exceptional equation \eqref{eq:downEqn}.
\end{theorem}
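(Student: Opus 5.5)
The plan is to recast both transforms as linear algebra on a pair of maps between two graded vector spaces, in the spirit of Yamakawa's treatment of Harnad duality, and then check that this construction is involutive on irreducible data. Write $\bbC^n=\bigoplus_{j=1}^{l}U_j$ for the eigenspace decomposition of $A_\infty$, with $E_{n_j}$ the projection onto $U_j$. Set $W_i:=\im A_i$ and factor $A_i=\iota_i\circ a_i$, where $a_i:\bbC^n\to W_i$ is surjective and $\iota_i:W_i\hookrightarrow\bbC^n$ is the inclusion. The map $(v_1,\dots,v_m)\mapsto(A_1v_1,\dots,A_mv_m)$ gives an identification
\[
\bbC^{mn}/\calK\;\cong\;W:=\bigoplus_{i=1}^m W_i .
\]
Put $Q:=(a_i)_i:\bbC^n\to W$ and $P:=\sum_i\iota_i:W\to\bbC^n$. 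A direct computation from \eqref{eq:Bj} shows that, in this model, $\bar S$ acts on $W$ as $t_i$ on $W_i$, and $\bar B_j=-Q\,E_{n_j}\,P$. Dually, the original data are recovered as $A_i=P\,\pi_i\,Q$ and $A_\infty=\sum_j\tau_jE_{n_j}$, where $\pi_i$ is the projection of $W$ onto $W_i$. So $\mathcal{ML}_q^{+}$ simply exchanges the roles of the two graded spaces $(\bbC^n,\{U_j\},\{\tau_j\})$ and $(W,\{W_i\},\{t_i\})$ and the two maps $P,Q$, up to the sign and the powers of $q$ present in \eqref{eq:qLapeq} and \eqref{eq:qinvmiddleLaplace}.

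Next I would isolate the nondegeneracy conditions that make this exchange involutive. For each $j$ we need:
\begin{enumerate}[(i)]
\item $Q|_{U_j}$ is injective, i.e.\ $U_j\cap\bigcap_i\Ker A_i=0$;
\item $E_{n_j}P$ maps $W$ onto $U_j$, i.e.\ $U_j\subset\sum_i\im A_i+\bigoplus_{k\neq j}U_k$.
\end{enumerate}
These are the only places where irreducibility and non-exceptionality enter. The subspace in (i) is killed by every $A_i$ and is preserved by $A_\infty$, so it is a common invariant subspace; it is therefore $0$ or all of $\bbC^n$. In the latter case all $A_i=O$, and irreducibility forces $n=1$, which is the exceptional case. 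For (ii), the subspace $\sum_i\im A_i+\bigoplus_{k\ne j}U_k$ is stable under all $A_i$ and under $A_\infty$. It is therefore all of $\bbC^n$ unless it is $0$, and the latter again forces the exceptional case.

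Now I would apply $\mathcal{ML}_q^{-}$ to \eqref{eq:qmLeq}. Its constant term is $-q\bar S$, whose eigenvalues are $-qt_i$ on $W_i$, and its residues are $q\bar B_j$ at $\tau_j$. The poles of the output are therefore at $x+(-qt_i)/q=x-t_i$, and the explicit $q$'s and signs cancel, so that $\mathcal{ML}_q^{-}$ acting on the data $(q\bar B_j,-q\bar S)$ produces an equation on $\bigoplus_j\im(\bar B_j)$. By (i) and (ii), $\im\bar B_j=Q(U_j)\cong U_j$ via $Q|_{U_j}$. So the new space is isomorphic to $\bigoplus_jU_j=\bbC^n$, the new constant term is $A_\infty$, and the new residue at $t_i$ becomes $P\pi_iQ=A_i$. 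Tracking the isomorphism $\bigoplus_j Q(U_j)\to\bbC^n$ produces the gauge matrix required in \eqref{eq:equiv}. This step is the main bookkeeping obstacle: one has to verify, block by block, that the kernel quotient $\calK'$ of the second transform is exactly what conditions (i) and (ii) remove, and that all the scalars cancel.

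Finally, for irreducibility of $\mathcal{ML}_q^{+}$, let $W'\subset W$ be a nonzero proper subspace invariant under $\bar S$ and all $\bar B_j$. Invariance under $\bar S$ means $W'=\bigoplus_i(W'\cap W_i)$. The subspace $P(W')+\sum_j E_{n_j}P(W')$, or more cleanly $\bigoplus_j E_{n_j}P(W')$, is then invariant under $A_\infty$ and all $A_i$. Using (i) and (ii) together with the analogous conditions for the transformed data (obtained from the involutivity just proved), this subspace is nonzero and proper, which contradicts irreducibility. Non-exceptionality of the output follows because an exceptional output, transformed back by $\mathcal{ML}_q^{-}$, would give the zero system, not the original equation. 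The statements for $\mathcal{ML}_q^{-}$ follow by the symmetric argument with $D_x$ and $D_x^-$ interchanged.
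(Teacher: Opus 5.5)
Your proposal is correct, but it takes a different route from the paper. The paper's proof does no linear algebra of its own. It notes three things:
\begin{itemize}
\item $\mathcal{ML}_q^{+}$ uses the same quotient $\bbC^{mn}/\calK$ and the same induced tuple $(\bar S,\bar B_1,\dots,\bar B_l)$ as the differential middle Laplace transform $\mathcal{ML}^{+}$, multiplied by the nonzero factor $q$;
\item this factor does not affect irreducibility or exceptionality;
\item in $\mathcal{ML}_q^{-}$ the factor is cancelled by the division by $q$ in \eqref{eq:invqmLeq}, and the poles $-(-qt_i)/q$ return to $t_i$.
\end{itemize}
The paper then imports the differential result (Theorem 2.10 of Adachi, cf.\ Theorem 7 of Yamakawa). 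You instead re-prove that differential result directly in the $q$-setting. Your steps are sound: the identification $\bbC^{mn}/\calK\cong\bigoplus_i\im A_i$, the factorizations $A_i=P\pi_iQ$ and $\bar B_j=-QE_{n_j}P$, and the identity $\im\bar B_j=Q(U_j)\cong U_j$ under (i) and (ii) all check out. Your scalar bookkeeping is also right: the signs and powers of $q$ combine as $(-1)(-q)q^{-1}=1$, so the round trip returns exactly $A_\infty$ and $A_i$ at $t_i$. Your derivation of (i) and (ii) from irreducibility and non-exceptionality is correct as well.

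What your route buys:
\begin{itemize}
\item a self-contained argument with an explicit gauge matrix $\bigoplus_j(Q|_{U_j})^{-1}$;
\item the observation that the involution needs only (i) and (ii), which are weaker than irreducibility; they are the analogues of $(*)$ and $(**)$ with $A_\infty$ playing the role of a residue.
\end{itemize}
The paper's route buys brevity. It also makes clear that nothing specifically $q$-theoretic happens at the matrix level.

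One step should be tightened: the irreducibility argument. You justify that $V'=\bigoplus_jE_{n_j}P(W')$ is nonzero and proper by a vague appeal to (i), (ii) and involutivity, but none of these is needed.
\begin{itemize}
\item $V'\neq 0$: you have $V'\supset P(W')=\sum_iW'_i$, and $P$ is the inclusion on each $W_i$. So $V'=0$ forces $W'=0$.
\item $V'\neq\bbC^n$: if $V'=\bbC^n$, then $E_{n_j}P(W')=U_j$ for every $j$. Invariance under $\bar B_j$ gives $Q(U_j)=QE_{n_j}P(W')\subset W'$. By $\bar S$-invariance, $W_i=\pi_iQ(\bbC^n)\subset W'$ for every $i$, so $W'=W$.
\end{itemize}
Similarly, $W\neq 0$ and non-exceptionality of the output follow immediately from $\im\bar B_j\cong U_j\neq 0$. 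You do not need the detour through the zero system.
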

\begin{proof}
As one can see below, the transforms $\mathcal{ML}_{q}^{\pm}$
have the same construction as their differential counterparts $\mathcal{ML}^{\pm}$,
up to nonzero rescalings of the coefficient matrices
and pole positions.
Therefore, the assertion follows from the corresponding
result for differential equations (Theorem~\ref{thm:differentialML} below).
\end{proof}

In the following, we recall the properties of $\mathcal{ML}^\pm$ in the formulation of
\cite{Adachi2026arXiv}.
For a linear ordinary differential equation of the form
\begin{equation}\label{eq:LODE}
    \frac{du}{dx}
    =\left(A_{\infty}+\sum_{i=1}^{m}\frac{A_i}{x-t_i}\right)u,
\end{equation}
the middle Laplace transform $\mathcal{ML}^{+}$ gives
    \begin{equation}
        \frac{dv}{dx}=\left(-\bar{S}+\sum_{j=1}^{l}\frac{\bar{B}_{j}}{x-\tau_j}\right)v,
    \end{equation}
where $(\bar{S},\bar{B}_1,\ldots,\bar{B}_{l})$ are the same as those appearing in the $q$-middle Laplace transform~\footnote{In \cite{Adachi2026arXiv}, the author used the notations $\mathcal{ML}$ and $\mathcal{ML}^{-1}$ instead of $\mathcal{ML}^{+}$ and $\mathcal{ML}^{-}$, respectively.},
namely, the linear action of $(S,B_{1},\ldots,B_{l})$ on the quotient space $\bbC^{mn}/\calK$, where $S$, $B_i$, $\calK$ are given by \eqref{eq:qOkubocoeff}, \eqref{eq:Bj} and \eqref{eq:K}, respectively. 
Similarly, applying the inverse middle Laplace transform $\mathcal{ML}^{-}$ to \eqref{eq:LODE}, we have
    \begin{equation}
        \frac{dw}{dx}=\left(\bar{S}+\sum_{j=1}^{l}\frac{\bar{B}_{j}}{x+\tau_j}\right)w.
    \end{equation}
To explain the result for the middle Laplace transform, we introduce the notions of irreducibility and equivalence for linear differential equations, which are defined in the same way as for $q$-difference equations.
\begin{definition}
    A linear differential equation \eqref{eq:LODE} is \emph{irreducible} if there is no common invariant subspace of coefficient matrices $(A_{\infty},A_1,\dots,A_{m})$ except $\mathbb{C}^n$ and $\{0\}$.
    An equation \eqref{eq:LODE} is called \emph{exceptional} if $n=1$ and $A_i=0$ for all $i=1,2,\ldots,m$. 
    An equation which is not exceptional is called \emph{non-exceptional}.
    
    Besides, two linear ordinary differential equations 
    \begin{equation}\label{eq:equivAdiff}
        \frac{du}{dx}=\left(A_{\infty}+\sum_{i=1}^{m}\frac{A_{i}}{x-t_i}\right)u\qquad (A_{\infty},\,A_{i}\in\mathrm{Mat}(n,\bbC))
     \end{equation}
     and
     \begin{equation}\label{eq:equivBdiff}
        \frac{dv}{dx}=\left(B_{\infty}+\sum_{i=1}^{m}\frac{B_{i}}{x-t_i}\right)v
        \qquad (B_{\infty},\,B_{i}\in\mathrm{Mat}(n,\bbC))
    \end{equation}
    are \emph{equivalent} if there exists a constant invertible matrix $P\in\mathrm{GL}(n,\bbC)$ such that the gauge transform
    $u=Pv$ sends the equation \eqref{eq:equivAdiff} to \eqref{eq:equivBdiff}.
    That is, \eqref{eq:equiv} holds.
\end{definition}
Then, the properties of the (inverse) middle Laplace transform are given as follows.
\begin{theorem}[Theorem 2.10 of \cite{Adachi2026arXiv}; cf. Theorem 7 of \cite{Yamakawa2011}]\label{thm:differentialML}
    Suppose that equation \eqref{eq:LODE} is irreducible and non-exceptional. 
    Then the following hold.
    \begin{itemize}
    \item The resulting equations of the compositions $\mathcal{ML}^{-}\circ\mathcal{ML}^{+}$ and $\mathcal{ML}^{+}\circ\mathcal{ML}^{-}$ are both equivalent to the original equation \eqref{eq:LODE}. 
    Namely, it holds that
    \begin{equation}
        \mathcal{ML}^{-}\circ\mathcal{ML}^{+}=\mathrm{id},
        \quad 
        \mathcal{ML}^{+}\circ\mathcal{ML}^{-}=\mathrm{id},
    \end{equation}
    where the symbol $``="$ is understood up to equivalence. 
    \item Both the resulting equations of $\mathcal{ML}^{\pm}$ are also irreducible and non-exceptional.
    \end{itemize}
\end{theorem}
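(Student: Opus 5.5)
The plan is a linear-algebraic argument in the spirit of Harnad duality \cite{Yamakawa2011}. We first rewrite the data $(A_\infty,A_1,\dots,A_m)$ symmetrically in terms of two spaces and two linear maps. Put $W=\bbC^n$, split by the eigenspaces of $A_\infty$ as $W=\bigoplus_{j=1}^{l}E_{n_j}W$. Put $V:=\bbC^{mn}/\calK$. Since $\bbC^n/\Ker A_i\cong\im A_i$, the map $(v_i)_i\mapsto(A_iv_i)_i$ gives an isomorphism
\begin{equation}
V\cong\bigoplus_{i=1}^{m}\im A_i,
\end{equation}
and under it $\bar S$ acts on the summand $\im A_i$ by $t_i$. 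Introduce the maps $G:W\to V$, $G(u)=(A_iu)_i$, and $F:V\to W$, $F((w_i)_i)=\sum_i w_i$. A direct check from \eqref{eq:Bj} gives
\begin{equation}
A_i=F\,\pi_i\,G,\qquad \bar B_j=-G\,E_{n_j}\,F,
\end{equation}
where $\pi_i$ is the projection of $V$ onto $\im A_i$. So the original equation is encoded by the quadruple $(W,\,A_\infty=\sum_j\tau_jE_{n_j},\,V,\,\bar S=\sum_i t_i\pi_i)$ together with $(F,G)$. The transformed equation is the same kind of object with the roles of $(W,A_\infty)$ and $(V,\bar S)$ exchanged, the roles of $F$ and $G$ swapped, and $\bar S$ replaced by $-\bar S$.

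Applying $\mathcal{ML}^{-}$ to the transformed system, its own middle space is $\bigoplus_j\im\bar B_j$. We aim to identify this with $W$ via $E_{n_j}W\to\im\bar B_j$, $u\mapsto -GE_{n_j}Fw$ type maps. Concretely, we show two facts:
\begin{itemize}
\item[(a)] $G$ restricted to each eigenspace $E_{n_j}W$ is injective.
\item[(b)] $\pi_i F$ composed with the eigen-decomposition is surjective onto each $\im A_i$ in the appropriate sense, i.e.\ $\sum_j E_{n_j}F$ covers what is needed.
\end{itemize}
Together these give $\im\bar B_j=G(E_{n_j}W)\cong E_{n_j}W$. For (a), the kernel of $G$ on $E_{n_j}W$ is the space of $\tau_j$-eigenvectors of $A_\infty$ annihilated by all $A_i$. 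This space is invariant under every $A_i$ and $A_\infty$, so by irreducibility it is $0$ or all of $W$. The latter forces $n=1$ and $A_i=0$, which is the exceptional case. For (b) we use the dual invariant subspace $\sum_i\im A_i+\dots$ together with the surjectivity of $\bbC^n\to\im A_i$.

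With this identification, we transport the doubly transformed coefficients back to $W$. The constant term becomes $-(-A_\infty)$-type data, with the sign and the pole shift $\tau_j\mapsto-\tau_j$ cancelling between $\mathcal{ML}^{+}$ and $\mathcal{ML}^{-}$. The residues become $F\pi_iG=A_i$. This yields a constant gauge equivalence with \eqref{eq:LODE}, and $\mathcal{ML}^{+}\circ\mathcal{ML}^{-}=\mathrm{id}$ follows by the symmetric argument.

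For irreducibility, let $V'\subset V$ be invariant under $\bar S$ and all $\bar B_j$. We set $W_1:=\sum_j E_{n_j}F(V')$ and $W_2:=\{u\in W: G(E_{n_j}u)\in V'\ \forall j\}$, and check that both are invariant under $A_\infty$ and all $A_i$. Irreducibility then forces each to be trivial, and using (a) and (b) this forces $V'=0$ or $V'=V$. Non-exceptionality of the transform follows because an exceptional transform would, by invertibility, come back to an exceptional original. The main obstacle I expect is the bookkeeping in the identification step: verifying precisely the surjectivity statement (b) and checking that the signs and pole positions compose to the identity rather than to a rescaled copy.
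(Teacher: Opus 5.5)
Your argument is correct. The paper gives no proof of this statement of its own: it is quoted from \cite{Adachi2026arXiv} (cf.\ \cite{Yamakawa2011}) and used only to derive Theorem~\ref{thm:qMLproperty}. Your encoding $A_i=F\pi_iG$, $\bar B_j=-GE_{n_j}F$, together with the nondegeneracy conditions (a) and (b), is the standard Harnad-duality linear algebra in the spirit of \cite{Yamakawa2011}, so it gives a self-contained proof.

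The one step you must state precisely is (b). It is \emph{not} the surjectivity of $F$, i.e.\ $\sum_i\im A_i=\bbC^n$. That fails for irreducible non-exceptional systems such as $n=2$, $m=1$, $A_\infty=\mathrm{diag}(\tau_1,\tau_2)$ with $\tau_1\neq\tau_2$, $A_1=\begin{pmatrix}1&1\\1&1\end{pmatrix}$, because $\sum_i\im A_i$ need not be $A_\infty$-invariant. What you need, and what is exactly dual to (a), is
\[
\sum_{i=1}^{m}\im A_i+\im(A_\infty-\tau_jI_n)=\bbC^n,
\quad\text{equivalently}\quad
E_{n_j}\Bigl(\sum_{i=1}^{m}\im A_i\Bigr)=E_{n_j}\bbC^n
\qquad(1\le j\le l).
\]
The left-hand space is invariant under $A_\infty$ and under every $A_k$, and it is nonzero unless the system is exceptional, so irreducibility gives it. It is also just your $W_1$ with $V'=V$. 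With (a) and this form of (b), the map $\psi(u)=(-GE_{n_j}u)_j$ is an isomorphism $\bbC^n\to\bigoplus_j\im\bar B_j$. It conjugates the doubly transformed constant term and residues exactly to $A_\infty$ and $A_i$, so no sign or shift survives.

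Two minor points. First, in the irreducibility step $W_1$ alone suffices.
\begin{itemize}
\item If $W_1=0$, then $F(V')=0$. Since $V'=\bigoplus_i\pi_iV'$ and $F$ is injective on each summand $\im A_i$, this forces $V'=0$.
\item If $W_1=\bbC^n$, then $V'\supset\bar B_jV'=G(E_{n_j}\bbC^n)$ for all $j$. Hence $V'\supset\pi_iG(\bbC^n)=\im A_i$ for all $i$, so $V'=V$.
\end{itemize}
Second, in the non-exceptionality step the contradiction is that $\mathcal{ML}^{-}$ of an exceptional system is the zero system (all residues vanish, so the quotient is $0$), not an exceptional system. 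Alternatively, $\bar B_j=0$ for all $j$ contradicts (a) and (b) directly.
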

We now give the details of the proof of
Theorem~\ref{thm:qMLproperty}.
The construction of $\mathcal{ML}_{q}^{+}$ involves exactly the same quotient space and induced matrices $(\bar{S},\bar{B}_{1},\ldots,\bar{B}_{l})$ as the middle Laplace transform $\mathcal{ML}^{+}$, except for an overall nonzero factor $q$ in the coefficient matrices.
This factor does not affect irreducibility or exceptionality.
In the subsequent application of $\mathcal{ML}_{q}^{-}$, the factor $q$ is canceled by the division by $q$ in \eqref{eq:invqmLeq}, and the pole positions return to $t_i$. 
Hence, the resulting matrix tuple is the same as that obtained from $\mathcal{ML}^{-}\circ\mathcal{ML}^{+}$. 

\begin{remark}\label{rem:Harnad}
For the ordinary differential system \eqref{eq:LODE}
considered above, the middle Laplace transform
$\mathcal{ML}^{+}$ coincides, up to equivalence, with the
Harnad dual functor as defined in
\cite[Section~2.3]{Yamakawa2016}.
Thus, for systems of the form \eqref{eq:Eqn},
the $q$-middle Laplace transform can be regarded as
a $q$-analog of Harnad duality.
A detailed account of this identification will be given
in a subsequent paper; see Section~\ref{sec:futurework}.
\end{remark}

\section{$q$-middle convolution}\label{sec:qmc}
In this section, we give an interpretation of the $q$-middle convolution in terms of the $q$-middle Laplace transform.

\subsection{A review of $q$-middle convolution}
In 2017, Sakai and Yamaguchi \cite{SakaiYamaguchi} first introduced the $q$-middle convolution for linear $q$-difference equations of the form 
    \begin{equation}\label{eq:SYeqn}
        T_{x}u=\left(A_{\infty}+\sum_{i=1}^{m}\frac{A_{i}}{1-\frac{x}{t_i}}\right)u,
    \end{equation}
where $A_{\infty},A_{i}\in\mathrm{Mat}(n,\bbC)$ (we note that the matrix $A_{\infty}$ is not necessarily diagonal unlike the above discussion). 
Their $q$-middle convolution corresponds to the Jackson integral transformation
    \begin{equation}\label{eq:qEuler}
    f(x) \mapsto \int_{0}^{\infty}f(s)\frac{(q^{\lambda+1} sx^{-1})_{\infty}}{(qsx^{-1})_{\infty}}\,d_q s,
    \end{equation}
where $\lambda\in\bbC$ is a parameter.
Here we remark that the transformation \eqref{eq:qEuler} is a $q$-analog of the Euler transform
    \begin{equation}
    f(x)\mapsto\int_{\Delta}f(s)(1-s/x)^{-\lambda}\,ds.
    \end{equation}
Sakai and Yamaguchi also studied the fundamental properties of the $q$-middle convolution such as composition properties, irreducibility, rigidity, and the relation to the spectral types.

After the pioneering work of Sakai and Yamaguchi, Arai and Takemura \cite{AraiTakemura2026} reformulated the $q$-middle convolution.
They rewrote equation \eqref{eq:SYeqn} as
\begin{equation}\label{eq:ATeq}
        D_{x}u=\frac{1}{1-q}\left(\frac{A_{0}}{x}+\sum_{i=1}^{m}\frac{A_{i}}{x-t_i}\right)u
        =\frac{1}{1-q}\left(\sum_{i=0}^{m}\frac{A_{i}}{x-t_i}\right)u
    \end{equation}
with $t_{0}=0$ and $A_{0}=I_{n}-A_{\infty}-A_{1}-\dots-A_{m}$, and reformulated Sakai and Yamaguchi's $q$-middle convolution as the operation for this type of equation.
Their reformulated $q$-middle convolution corresponds to the transform 
    \begin{equation}
        f(x) \mapsto x^{-\lambda} \int_{0}^{\xi\infty} f(s)\frac{(q^{\lambda+1} sx^{-1})_{\infty}}{(qsx^{-1})_{\infty}}\,d_q s,
    \end{equation}
where $\lambda,\xi\in\bbC$ are parameters. 
Although their definition is similar to the one by Sakai and Yamaguchi, the additivity of $q$-middle convolution becomes simpler than in Sakai and Yamaguchi's original formulation. 
In \cite{AraiTakemura2023}, applications of the $q$-middle convolution to the $q$-hypergeometric equation and its variants are also given.
In \cite{Nobukawa2026}, the second author rewrote Arai and Takemura's $q$-middle convolution as an operation for equations of the form
    \begin{equation}\label{eq:Neqn}
        D_{x}u=\left(\sum_{i=1}^{m}\frac{A_i}{x-t_i}\right)u
        \quad
        (A_i \in \mathrm{Mat}(n,\bbC),~t_i\neq t_j\,(i\neq j))
    \end{equation}
and gave an interpretation of the $q$-middle convolution by use of $q$-Okubo type equations. 
We note that, by absorbing the factor $(1-q)^{-1}$ into the residue matrices and relabeling the indices, we can regard equation \eqref{eq:ATeq} as a special case of equation \eqref{eq:Neqn} in which $t_i=0$ for some $i$. 
Thus, the formulation in \cite{Nobukawa2026} is essentially equivalent to that of Arai and Takemura.

Following \cite{Nobukawa2026}, we recall the definition of the $q$-middle convolution. 
Let $\lambda\in\bbC$ be a parameter. 
For the equation \eqref{eq:Neqn}, we define $A, G_1,G_2,\ldots,G_{m}\in\mathrm{Mat}(mn,\bbC)$ by
    \begin{align}
  A&=\begin{pmatrix}
	A_{1} & A_2 & \cdots & A_{m} \\
	A_{1} & A_2 & \cdots & A_{m} \\
	\vdots & \vdots & & \vdots \\
	A_{1} & A_2 & \cdots & A_{m}
	\end{pmatrix},
    \\
    \label{eq:convmat}
    G_{i}&=\begin{pmatrix}
                 &  &  & O& &    \\
                 &  &  & \vdots& &    \\
                 &  &  & O& &    \\
            q^{\lambda}A_{1} & \cdots & q^{\lambda}A_{i-1} & q^{\lambda}A_{i}+[\lambda]I & q^{\lambda}A_{i+1}& \cdots & q^{\lambda}A_{m}\\
                 &  &  & O& &    \\
                 &  &  & \vdots& &    \\
                 &  &  & O &    \\
        \end{pmatrix}
        ~\text{$i$-th block}
    \end{align}
where $[\lambda]:=\ds\frac{1-q^{\lambda}}{1-q}$.
Then the subspaces 
    \begin{align}
        \calK&=\left\{
        \begin{pmatrix}
        v_1 \\
        v_2 \\
        \vdots\\
        v_{m}
        \end{pmatrix} 
        \in (\bbC^n)^m
        ~
        \middle|~ 
        v_i \in \Ker A_i ~~(1\le i \le m)\right\} \subset\bbC^{mn}, 
        \\
        \calK_{\infty}(\lambda)&:=\Ker(G_1+G_2+\cdots+G_m)=\Ker(q^{\lambda}A+[\lambda]I_{mn})\subset \bbC^{mn} \label{eq:qmcKinf}
    \end{align}
are invariant with respect to the action of $(G_1,G_2,\ldots,G_{m})$. 
We remark that the subspace $\calK$ is the same as that in the case of the $q$-middle Laplace transform.
Let $(\bar{G}_{1},\bar{G}_{2},\dots,\bar{G}_{m})$ be the matrices which represent the linear action of $(G_1,G_2,\ldots,G_{m})$ on the quotient space $\bbC^{mn}/(\calK+\calK_{\infty}(\lambda))$.
\begin{definition}\label{def:qmc}
  We refer to the operation that sends \eqref{eq:Neqn} to the equation
    \begin{equation}\label{eq:qmceqn}
    D_{x}v=\left(\sum_{i=1}^{m}\frac{\bar{G}_{i}}{x-t_i}\right)v
    \end{equation}
 as the \emph{$q$-middle convolution} with a parameter $\lambda$ and denote it by $mc_{\lambda}$.
\end{definition}
As in the $q$-middle Laplace transform (Remark \ref{rem:qmLzero}), by convention, we allow the quotient space in the definition of the $q$-middle convolution to be the zero vector space. 
The corresponding rank-zero system is also called the zero system and is not regarded as irreducible.

\begin{theorem}[Theorem 5.9 of \cite{AraiTakemura2026}]
    Suppose that equation \eqref{eq:Neqn} is irreducible.
    Then the resulting equation of $mc_{\lambda}$ is irreducible or the zero system.
\end{theorem}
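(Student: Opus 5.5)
The plan is to reduce the statement to the classical Dettweiler--Reiter theorem for Fuchsian systems, in the same spirit as the proof of Theorem~\ref{thm:qMLproperty}: show that $mc_\lambda$ is, up to a nonzero scalar, the additive middle convolution of the residue tuple $(A_1,\ldots,A_m)$ with a modified parameter. Since $|q|<1$ and $q^{\lambda}=e^{\lambda\log q}\neq 0$, I would set
\begin{equation}
\mu:=q^{-\lambda}[\lambda]=\frac{q^{-\lambda}-1}{1-q}.
\end{equation}
Then \eqref{eq:convmat} can be rewritten as $G_i=q^{\lambda}G_i^{\mathrm{DR}}(\mu)$. Here $G_i^{\mathrm{DR}}(\mu)$ is the Dettweiler--Reiter convolution matrix: its only nonzero block row is the $i$-th, equal to $(A_1,\ldots,A_i+\mu I_n,\ldots,A_m)$.

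The first step is to compare the invariant subspaces. The subspace $\calK$ is literally the same as in the Dettweiler--Reiter construction. For the other one,
\begin{equation}
\calK_\infty(\lambda)=\Ker(q^{\lambda}A+[\lambda]I_{mn})=\Ker(A+\mu I_{mn}),
\end{equation}
which is exactly the Dettweiler--Reiter subspace $\Ker(G_1^{\mathrm{DR}}(\mu)+\cdots+G_m^{\mathrm{DR}}(\mu))$. Hence the quotient $\bbC^{mn}/(\calK+\calK_\infty(\lambda))$ coincides with the Dettweiler--Reiter quotient. The induced tuple satisfies $(\bar G_1,\ldots,\bar G_m)=q^{\lambda}\,mc^{\mathrm{DR}}_{\mu}(A_1,\ldots,A_m)$. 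Multiplying a tuple by a nonzero scalar does not change its common invariant subspaces. Also, irreducibility of \eqref{eq:Neqn} is by definition irreducibility of the tuple $(A_1,\ldots,A_m)$, because there is no $A_\infty$ here. So it suffices to show that $mc^{\mathrm{DR}}_\mu$ of an irreducible tuple is irreducible or zero.

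The second step is to invoke Dettweiler--Reiter (Corollary 3.6 of \cite{DR2007}, in the version for tuples of matrices). For $\mu\neq 0$ and a tuple satisfying their conditions $(*)$ and $(**)$, the middle convolution is irreducible. I would check that an irreducible tuple satisfies these conditions whenever $n\ge 2$. Condition $(*)$ says $\bigcap_{j\ne i}\Ker A_j\cap\Ker(A_i+\tau)=0$. The intersection is invariant under all $A_j$: each $A_j$ with $j\neq i$ kills it, and $A_i$ acts on it as the scalar $-\tau$. So by irreducibility it is $0$ or $\bbC^n$, and the latter forces all $A_j=0$ for $j\ne i$ and $A_i$ scalar, hence $n=1$. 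Condition $(**)$ says $\sum_{j\neq i}\operatorname{Im}A_j+\operatorname{Im}(A_i+\tau)=\bbC^n$, and it follows dually. The residual cases handled separately are:
\begin{itemize}
\item $n=1$: the quotient has dimension at most one, and a one-dimensional system is irreducible.
\item $\mu=0$, i.e.\ $q^\lambda=1$, so $[\lambda]=0$: then $\calK_\infty=\Ker A\supseteq\calK$, and the resulting tuple is equivalent to the original irreducible one via the map $\bbC^{mn}/\Ker A\to\bbC^n$, $(v_j)\mapsto\sum_j A_jv_j$. This map is injective by construction and surjective by irreducibility unless all $A_j=0$, in which case the result is the zero system.
\end{itemize}

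The main obstacle is this bookkeeping of the degenerate cases, namely rank one, $\mu=0$, and the precise hypotheses of the Dettweiler--Reiter corollary, rather than the identification itself. If the cited corollary does not directly give the ``irreducible or zero'' dichotomy in all cases, I would instead reproduce its short argument. Take a common invariant subspace $W$ of the $\bar G_i$, lift it to $\tilde W\subset\bbC^{mn}$ containing $\calK+\calK_\infty$, and project blockwise via $(v_j)\mapsto A_jv_j$ to obtain a common invariant subspace of the $A_i$. Irreducibility of $(A_1,\ldots,A_m)$ then forces that projection to be $0$ or $\bbC^n$, and hence $W=0$ or $W$ the whole quotient.
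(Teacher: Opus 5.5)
The paper gives no proof of its own here (the statement is quoted from Arai--Takemura), so your reduction is an independent route. Most of it is sound. The identities $G_i=q^{\lambda}G_i^{\mathrm{DR}}(\mu)$ and $\calK_\infty(\lambda)=\Ker(A+\mu I_{mn})$, with $\mu=(q^{-\lambda}-1)/(1-q)$, are correct. So are your verification of $(*)$ and $(**)$ for irreducible tuples with $n\ge 2$ and your treatment of $[\lambda]=0$.

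\textbf{The gap: your $n=1$ bullet is false.} For $n=1$, $\calK$ only absorbs the blocks with $A_i=0$. Hence $\bbC^{m}/(\calK+\calK_\infty(\lambda))$ has dimension equal to the number of nonzero $A_i$, minus at most one.
\begin{itemize}
\item Example: $m=2$, $A_1=a\neq 0$, $A_2=b\neq 0$, with $[\lambda]\neq 0$ and $a+b+\mu\neq 0$. Then $\calK=\calK_\infty(\lambda)=0$ and the result has rank two. This is exactly how rank-one systems are convolved into hypergeometric-type systems.
\item So the case ``$n=1$, at least two nonzero residues'' is covered by nothing you wrote.
\item The repair is easy. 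In that case $(*)$ and $(**)$ hold, as the paper remarks right after their definition, so the Dettweiler--Reiter corollary applies (stated, of course, for an \emph{irreducible} tuple).
\item Your bullet is correct only when at most one $A_i$ is nonzero, since then $\calK$ contains all the other blocks.
\end{itemize}

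\textbf{Your fallback also fails as phrased.} Suppose you push $\tilde W$ forward by $(v_j)\mapsto\sum_j A_jv_j$, or take $\sum_j A_j\pi_j(\tilde W)$ with block projections $\pi_j$. Consider an irreducible tuple with $\sum_j A_j=-\mu I_n$ and take $\tilde W=\calK+\calK_\infty(\lambda)$, i.e.\ $W=0$. Such tuples exist with nonzero quotient, e.g.\ generic $2\times 2$ matrices $A_1,A_2$ and $A_3=-\mu I_2-A_1-A_2$. Then $\calK_\infty(\lambda)$ is the diagonal copy of $\bbC^n$, and both constructions give all of $\bbC^n$, so you cannot conclude.

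\textbf{What works is to use slices.}
\begin{itemize}
\item Write $y^{(k)}\in\bbC^{mn}$ for $y$ placed in the $k$-th block, and set $Y_k=\{y : y^{(k)}\in\tilde W\}$.
\item Compute $G_k y^{(i)}=(q^{\lambda}A_iy)^{(k)}$ for $k\neq i$, and $G_i y^{(i)}=(q^{\lambda}A_iy+[\lambda]y)^{(i)}$. Hence $A_iY_i\subseteq Y_k$ for all $i,k$; only $q^{\lambda}\neq 0$ is used.
\item Therefore $Z=\sum_i A_iY_i\subseteq\bigcap_k Y_k$, and $A_jZ\subseteq A_jY_j\subseteq Z$ for every $j$. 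Irreducibility gives $Z=0$ or $Z=\bbC^n$.
\item If $Z=\bbC^n$, every $Y_k=\bbC^n$, so $\tilde W=\bbC^{mn}$.
\item If $Z=0$, then $Y_k\subseteq\Ker A_k$. Since $G_kv$ is supported in the $k$-th block, this gives $(q^{\lambda}A+[\lambda]I)\tilde W\subseteq\calK$.
\item For $[\lambda]\neq 0$, the lemma \eqref{eq:lemPlambda} with $P_\lambda=-(q^{\lambda}A+[\lambda]I)$ then yields $\tilde W\subseteq\calK+\calK_\infty(\lambda)$, i.e.\ $W=0$.
\end{itemize}
This argument needs neither $(*)$, $(**)$ nor any case split on $n$. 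Combined with your $[\lambda]=0$ argument, it proves the statement outright and makes the Dettweiler--Reiter citation unnecessary.
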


\begin{remark}
    We note that our parameter $\lambda$, following the convention of \cite{Nobukawa2026}, corresponds to $-\lambda$ in the convention of \cite{AraiTakemura2026}.
\end{remark}

\begin{remark}
    As in the $q$-middle Laplace transform, the solution to equation \eqref{eq:qmceqn} can be represented by the Jackson integral transformation of the initial equation \eqref{eq:Neqn} (cf. \cite{SakaiYamaguchi, AraiTakemura2026}).
    In \cite{Nobukawa2026}, a detailed explanation of this fact is given by use of a $q$-Okubo type equation. 
\end{remark}

We shall briefly recall some important properties of the $q$-middle convolution referring to Arai and Takemura \cite{AraiTakemura2026}. 

\begin{definition}
    For equation \eqref{eq:Neqn}, we define conditions $(*)$ and $(**)$ as follows. 
    \begin{align}
    &\forall i \in\{1,2,\ldots,m\},\,\forall\tau\in\bbC\,; \quad\bigcap_{j\neq i}\Ker A_{j}\cap\Ker(A_{i}+\tau I_{n})  =\{0\}, \tag{$*$}\label{eq:DRstar1}
    \\
    &\forall i \in\{1,2,\ldots,m\},\,\forall\tau\in\bbC\,; \quad\sum_{j\neq i}\im  A_{j}+\im (A_{i}+\tau I_{n})  =\mathbb{C}^n.
    \tag{$**$}\label{eq:DRstar2}
    \end{align}
\end{definition}
These conditions are additive analogs of conditions $(*)$ and $(**)$ introduced by Dettweiler and Reiter \cite{DR2000}.
We note that the equation \eqref{eq:Neqn} satisfies \eqref{eq:DRstar1} and \eqref{eq:DRstar2} under either of the following assumptions: $n\ge2$ and the equation is irreducible; or $n=1$ and at least two of the $A_i$ are nonzero.

\begin{proposition}[Proposition 5.6 of \cite{AraiTakemura2026}; cf. Proposition 4.11 of \cite{SakaiYamaguchi}]
    The conditions \eqref{eq:DRstar1} and \eqref{eq:DRstar2} are preserved by the $q$-middle convolution.
\end{proposition}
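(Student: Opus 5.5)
The plan is to reduce the statement to the corresponding fact for the additive (differential) middle convolution of Dettweiler and Reiter \cite{DR2007}. This is the same strategy used for Theorem~\ref{thm:qMLproperty}, where the $q$-construction turned out to be a rescaled copy of the differential one.

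First I compare the matrices $G_i$ in \eqref{eq:convmat} with the Dettweiler--Reiter convolution matrices. For a parameter $\mu\in\bbC$, the latter are the block matrices $G_i^{\mathrm{DR}}(\mu)$ whose only nonzero block row is the $i$-th one, equal to $(A_1,\ldots,A_i+\mu I_n,\ldots,A_m)$. Since $q^{\lambda}\neq 0$, set $\mu:=q^{-\lambda}[\lambda]$. Then
\begin{equation}
G_i=q^{\lambda}\,G_i^{\mathrm{DR}}(\mu)\qquad(1\le i\le m).
\end{equation}
In particular $\calK_\infty(\lambda)=\Ker(q^\lambda A+[\lambda]I_{mn})=\Ker(A+\mu I_{mn})$, which is exactly the subspace $\calK_\infty$ of \cite{DR2007} for the parameter $\mu$. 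The subspace $\calK$ is literally the same in both settings. Hence the quotient $\bbC^{mn}/(\calK+\calK_\infty(\lambda))$ coincides with the Dettweiler--Reiter quotient, and the induced tuple is $(\bar G_1,\ldots,\bar G_m)=q^{\lambda}\,(\bar G_1^{\mathrm{DR}}(\mu),\ldots,\bar G_m^{\mathrm{DR}}(\mu))$. In words, $mc_\lambda$ of \eqref{eq:Neqn} is, up to the overall factor $q^\lambda$ on the residues, the additive middle convolution $mc^{\mathrm{DR}}_\mu$ of the Fuchsian system $du/dx=\sum_i A_i/(x-t_i)\,u$.

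Next I observe that conditions \eqref{eq:DRstar1} and \eqref{eq:DRstar2} are invariant under multiplying the whole tuple $(A_1,\ldots,A_m)$ by a common nonzero constant $c$. Indeed $\Ker(cA_j)=\Ker A_j$ and $\im(cA_j)=\im A_j$, while $\Ker(cA_i+\tau I)=\Ker(A_i+(\tau/c)I)$ and $\im(cA_i+\tau I)=\im(A_i+(\tau/c) I)$. Since $\tau$ ranges over all of $\bbC$, the conditions are unchanged. It then suffices to invoke the preservation of $(*)$ and $(**)$ under the additive middle convolution: the original tuple satisfies them, hence so does $mc^{\mathrm{DR}}_\mu$ (Dettweiler--Reiter, Lemma~2.7/2.8 of \cite{DR2007}), and therefore so does its $q^\lambda$-multiple.

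The main point needing care is the degenerate parameter $\mu=0$, that is, $q^{\lambda}=1$, together with making sure the cited differential lemma covers all $\mu$ including $0$. For $\mu=0$ I would check directly that $\calK_\infty=\Ker A\supset\calK$. The map $\bbC^{mn}\to\bbC^n$, $(v_j)\mapsto\sum_j A_jv_j$, has kernel $\Ker A$ and, by \eqref{eq:DRstar2} with $\tau=0$, is surjective. Under this map $\bar G_i$ corresponds to $A_i$, so the result is equivalent to the original system and the conditions hold trivially. If the cited lemma in \cite{DR2007} is stated only for $\mu\neq0$, I would include this short argument separately; otherwise the reduction above gives the whole proof.
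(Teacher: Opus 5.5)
Your argument is correct, and it takes a different route from the paper. The paper gives no proof of its own here; it simply quotes Arai--Takemura (cf.\ Sakai--Yamaguchi), where the statement is established within the $q$-setting.

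You instead observe that, in the normalization of Definition~\ref{def:qmc}, the $q$-middle convolution is literally a rescaled Dettweiler--Reiter middle convolution. Indeed $G_i=q^{\lambda}G_i^{\mathrm{DR}}(\mu)$ with $\mu=q^{-\lambda}[\lambda]$, the subspace $\calK$ is the same, and $\calK_\infty(\lambda)=\Ker(A+\mu I_{mn})$. Hence $mc_\lambda(A_1,\ldots,A_m)=q^{\lambda}\,mc^{\mathrm{DR}}_{\mu}(A_1,\ldots,A_m)$ as matrix tuples. Combined with the scaling invariance of \eqref{eq:DRstar1} and \eqref{eq:DRstar2}, which you check correctly using that $\tau$ runs over all of $\bbC$, the claim reduces to the additive result.

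Your separate treatment of $\mu=0$, i.e.\ $q^{\lambda}=1$, is also correct. It is exactly Proposition~\ref{prop:AT0} re-derived: the $G_i$ depend only on $q^{\lambda}$, so $mc_\lambda=mc_0$ whenever $q^\lambda=1$. It also makes your argument independent of whether the cited differential statement covers $\mu=0$.

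What your route buys is economy and uniformity. It follows the same philosophy as the paper's proof of Theorem~\ref{thm:qMLproperty}, and the same dictionary transfers the other quoted $q$-results at once. For instance, additivity (Theorem~\ref{thm:AT1}) follows from Dettweiler--Reiter additivity via $mc^{\mathrm{DR}}_{\nu}(cX)=c\,mc^{\mathrm{DR}}_{\nu/c}(X)$ for $c\neq0$, together with the identity $q^{-\lambda_1}[\lambda_1]+q^{-\lambda_1-\lambda_2}[\lambda_2]=q^{-\lambda_1-\lambda_2}[\lambda_1+\lambda_2]$. The cited $q$-proof, by contrast, is self-contained in the $q$-literature and does not pass through the differential theory.

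The only thing to tidy up is the citation: replace the hedged ``Lemma 2.7/2.8'' by the precise statement in \cite{DR2007} that $mc_\mu$ preserves both conditions.
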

\begin{proposition}[Proposition 5.7 of \cite{AraiTakemura2026}; cf. Proposition 4.4 of \cite{SakaiYamaguchi}]\label{prop:AT0}
    Suppose that equation \eqref{eq:Neqn} satisfies \eqref{eq:DRstar2}. 
Then, the resulting equation of $mc_{0}$ is equivalent to the original equation \eqref{eq:Neqn}. 
Namely, it holds that
    \begin{equation}
        mc_{0}=\mathrm{id},
    \end{equation}
where the symbol $``="$ is understood up to equivalence. 
\end{proposition}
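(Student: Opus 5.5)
The plan is to compute $mc_{0}$ directly and to identify the quotient space $\bbC^{mn}/(\calK+\calK_{\infty}(0))$ with $\bbC^{n}$ through an explicit linear map that intertwines the $\bar{G}_i$ with the $A_i$. First specialize the definitions to $\lambda=0$. Since $q^{0}=1$ and $[0]=0$, the matrix $G_i$ in \eqref{eq:convmat} has $i$-th block row $(A_1,A_2,\ldots,A_m)$ and zeros elsewhere. Equivalently, $G_i=E^{(i)}A$, where $E^{(i)}$ is the projection onto the $i$-th block of $(\bbC^n)^m$. Also $\calK_{\infty}(0)=\Ker A$ by \eqref{eq:qmcKinf}.

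Next, introduce the linear map
\begin{equation}
\varphi:\bbC^{mn}\to\bbC^{n},\qquad \varphi(v_1,\ldots,v_m)^T=\sum_{j=1}^{m}A_jv_j .
\end{equation}
Every block row of $A$ equals $(A_1,\ldots,A_m)$, so $Av=(\varphi(v),\ldots,\varphi(v))^T$ and hence $\Ker A=\Ker\varphi$. Moreover $\calK\subset\Ker\varphi$, because $v_j\in\Ker A_j$ for every $j$ forces $\varphi(v)=0$. Therefore $\calK+\calK_{\infty}(0)=\Ker\varphi$, and $\varphi$ induces an injective map
\begin{equation}
\bar{\varphi}:\bbC^{mn}/(\calK+\calK_{\infty}(0))\to\bbC^{n}
\end{equation}
whose image is $\sum_{j}\im A_j$.

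Here condition \eqref{eq:DRstar2} enters, and this is the only place it is used. Taking $\tau=0$ (for any $i$) gives $\sum_{j}\im A_j=\bbC^n$, so $\bar{\varphi}$ is an isomorphism. In particular the quotient has dimension $n$ and is nonzero.

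Finally, check the intertwining relation. For $v\in\bbC^{mn}$, the vector $G_iv$ has $\varphi(v)$ in the $i$-th block and zero in the other blocks, so
\begin{equation}
\varphi(G_iv)=A_i\varphi(v)\qquad(1\le i\le m).
\end{equation}
Thus $\bar{\varphi}\circ\bar{G}_i=A_i\circ\bar{\varphi}$. Choose a basis of the quotient as in Remark~\ref{rem:projection} and let $P\in\mathrm{GL}(n,\bbC)$ be the matrix of $\bar{\varphi}$ in that basis. Then $\bar{G}_i=P^{-1}A_iP$ for all $i$, which is exactly the equivalence \eqref{eq:equiv} between \eqref{eq:qmceqn} and \eqref{eq:Neqn}. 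Since the whole argument is linear algebra, there is no real obstacle. The only point needing care is to confirm that $\calK\subset\Ker A$, so that the two subspaces merge into $\Ker\varphi$, and to note that only the $\tau=0$ instance of \eqref{eq:DRstar2} is needed.
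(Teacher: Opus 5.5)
Your proof is correct. At $\lambda=0$ you have $G_i=E^{(i)}A$ and $\calK\subset\Ker A=\Ker\varphi$, so the quotient is $\bbC^{mn}/\Ker\varphi$. The $\tau=0$ instance of \eqref{eq:DRstar2} then makes $\bar\varphi$ an isomorphism with $\bar\varphi\circ\bar G_i=A_i\circ\bar\varphi$.

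The paper itself gives no proof of this proposition; it quotes it from Arai--Takemura. Its own route to $mc_0=\mathrm{id}$ is the corollary following Remark~\ref{rem:lambda0}. There, Theorem~\ref{thm:qmcinterp} together with Remark~\ref{rem:lambda0} (needed because $[0]=0$) gives $mc_0=\mathcal{ML}_q^-\circ\mathcal{ML}_q^+$. Theorem~\ref{thm:qMLproperty} then finishes, and it is itself reduced to the differential inversion result, Theorem~\ref{thm:differentialML}. That route imports a nontrivial external theorem and requires the equation to be irreducible and non-exceptional. In return, it explains $mc_0=\mathrm{id}$ as an instance of Laplace inversion and yields additivity of $mc_\lambda$ by the same mechanism.

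Your direct computation is elementary, self-contained, and in fact more general, since it uses only $\sum_j\im A_j=\bbC^n$. That condition follows from \eqref{eq:DRstar2}. It also follows from irreducibility plus non-exceptionality: the subspace $\sum_j\im A_j$ is invariant under every $A_i$, and it vanishes only if all $A_i=O$, which for an irreducible equation is the exceptional case. So your argument also covers the paper's corollary, including the case $n=1$ with a single nonzero $A_i$, where \eqref{eq:DRstar2} fails.
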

\begin{theorem}[Theorem 5.8 of \cite{AraiTakemura2026}\footnote{The composition of Sakai--Yamaguchi's $q$-middle convolution is studied in \cite[Proposition 4.10 and Proposition 4.12]{SakaiYamaguchi}.
}]\label{thm:AT1}
    Suppose that equation \eqref{eq:Neqn} satisfies \eqref{eq:DRstar1} and \eqref{eq:DRstar2}.
    Then, the resulting equation of $mc_{\lambda_2}\circ mc_{\lambda_1}$ is equivalent to the resulting equation of $mc_{\lambda_{1}+\lambda_{2}}$ for any $\lambda_1,\lambda_{2}\in\bbC$. 
    Namely, it holds that
    \begin{equation}
        mc_{\lambda_2}\circ mc_{\lambda_1}=mc_{\lambda_1+\lambda_2},
    \end{equation}
where the symbol $``="$ is understood up to equivalence. 
\end{theorem}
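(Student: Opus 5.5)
The plan follows the Dettweiler--Reiter proof of additivity for the additive middle convolution. I would build an explicit linear map from the space underlying $mc_{\lambda_2}\circ mc_{\lambda_1}$ onto the space underlying $mc_{\lambda_1+\lambda_2}$, show that it intertwines the two tuples of residue matrices, and identify its kernel with exactly what the middle quotients remove. The only new ingredient compared to the differential case is the pair of $q$-number identities
\begin{equation}
q^{\lambda_1}q^{\lambda_2}=q^{\lambda_1+\lambda_2},\qquad q^{\lambda_2}[\lambda_1]+[\lambda_2]=[\lambda_1+\lambda_2],
\end{equation}
which play the role of $\lambda_1+\lambda_2$ in the classical argument.

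\textbf{Setup.} Write $V=\bbC^{mn}$ and let $G_i=G_i(\lambda_1)$ be the matrices \eqref{eq:convmat}. Let $\bar V=V/(\calK+\calK_\infty(\lambda_1))$ carry the induced tuple $\bar G_i$. The second convolution acts on $\bar V^m$ through matrices $H_i$. The block row of $H_i$ is $q^{\lambda_2}\bar G_1,\dots,q^{\lambda_2}\bar G_i+[\lambda_2]I,\dots,q^{\lambda_2}\bar G_m$. It is followed by the quotient by $\calK'+\calK'_\infty(\lambda_2)$, where $\calK'=\bigoplus_i\Ker\bar G_i$.

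\textbf{Step 1: work before the quotients.} I would first work on $V^m$ with the lifted matrices $\tilde H_i$, built from $G_i$, and compare them with $G_i(\lambda_1+\lambda_2)$ on $V$. For $w=(w_1,\dots,w_m)\in V^m$ write $(w_k)_l\in\bbC^n$ for the blocks. A first candidate is the ``diagonal'' map $\psi(w)_i=(w_i)_i$. Computing $\psi(\tilde H_kw)$ gives a $k$-th block
\begin{equation}
q^{\lambda_1+\lambda_2}\sum_l A_l(w_k)_l+[\lambda_1+\lambda_2](w_k)_k
\end{equation}
by the identities above. The target $G_k(\lambda_1+\lambda_2)\psi(w)$ instead has $k$-th block $q^{\lambda_1+\lambda_2}\sum_l A_l(w_l)_l+[\lambda_1+\lambda_2](w_k)_k$. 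So $\psi$ intertwines only modulo the discrepancy $\sum_lA_l\bigl((w_k)_l-(w_l)_l\bigr)$.

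To fix this, I would exploit that on $\bar V^m$ only classes matter. Inside the image of $\tilde H_k$ the $k$-th component is determined by $\sum_j G_jw_j$ and $w_k$. I would look for the corrected map among those of the form
\begin{equation}
\varphi(w)_i=\alpha(w_i)_i+\beta\sum_j A_i(w_j)_i,
\end{equation}
that is, inserting $G_j$-type terms, with constants $\alpha,\beta$ depending on $q^{\lambda_1},q^{\lambda_2}$. I would solve for $\alpha,\beta$ so that the cross terms cancel, using again $q^{\lambda_2}[\lambda_1]+[\lambda_2]=[\lambda_1+\lambda_2]$. The expected result is
\begin{equation}
\varphi\circ\tilde H_k=G_k(\lambda_1+\lambda_2)\circ\varphi\qquad(1\le k\le m).
\end{equation}

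\textbf{Step 2: descend to the quotients.} I would show that $\varphi$ maps $\calK+\calK_\infty(\lambda_1)$ (placed componentwise) and the lifts of $\calK'$ and $\calK'_\infty(\lambda_2)$ into $\calK+\calK_\infty(\lambda_1+\lambda_2)$. This yields an intertwining map between the two middle-convolved spaces.

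\textbf{Step 3: bijectivity.} Surjectivity should follow from \eqref{eq:DRstar2}, which gives $\sum_j\im G_j$ large enough. Injectivity on the quotient should follow from \eqref{eq:DRstar1}, which controls the kernels $\Ker\bar G_i$ and hence identifies $\calK'$ with the image of the componentwise kernels. Both conditions survive the first convolution by Proposition 5.6 of \cite{AraiTakemura2026}, so they can be used at both stages. As a consistency check, at $\lambda_2=0$ the map should reduce to the isomorphism of Proposition~\ref{prop:AT0}.

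\textbf{Main obstacle.} I expect Steps 2--3, the exact identification of the kernel of $\varphi$ with the middle subspaces, to be the hard part. This is the delicate dimension count in Dettweiler--Reiter. The exceptional values where $q^{\lambda_1}=0$-type degeneracies or $[\lambda_i]=0$ occur (e.g.\ $\lambda_1\in 2\pi i\mathbb Z/\log q$, where $\calK_\infty$ jumps) would be handled separately: there the appropriate map is a multiple of the identity up to the isomorphism of Proposition~\ref{prop:AT0}.
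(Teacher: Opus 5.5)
Your overall strategy (an explicit intertwiner on the unreduced spaces, then descent to the middle quotients) is viable. However, the one concrete formula you commit to is wrong, and Steps 2--3 depend on it.

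\textbf{Why the ansatz fails.} Since $\tilde H_k w$ is concentrated in the $k$-th slot, say with entry $u\in\bbC^{mn}$ there, your ansatz gives $\varphi(\tilde H_kw)_i=\beta A_iu_i$ for $i\neq k$. But $G_k(\lambda_1+\lambda_2)\varphi(w)$ is supported in block $k$. Take $n=1$, $m=3$, all $A_j\neq0$ and generic $\lambda_1,\lambda_2$. Then $\mathcal{K}=\mathcal{K}_\infty(\lambda_1+\lambda_2)=0$, so the intertwining must hold exactly. Test on $w$ whose only nonzero entry is $(w_k)_i=x$ ($i\neq k$). This gives $u_i=[\lambda_2]x$, which forces $\beta=0$. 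The remaining discrepancy in block $k$ is $\alpha q^{\lambda_1+\lambda_2}A_ix$, which forces $\alpha=0$.

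\textbf{The correct intertwiner.} The cross terms you found, $\sum_lA_l((w_k)_l-(w_l)_l)$, sum over the blocks of a single $w_k$. So the correction must be $\sum_lA_l(w_i)_l$, not $A_i\sum_j(w_j)_i$. Taking $\varphi(w)_i=[\lambda_1](w_i)_i+q^{\lambda_1}\sum_lA_l(w_i)_l$ gives $\varphi(w)=\sum_jG_j(\lambda_1)w_j$, and this intertwines exactly. Indeed, the $k$-th slot of $\tilde H_kw$ is $q^{\lambda_2}\varphi(w)+[\lambda_2]w_k$. With $E_k=\mathrm{diag}(O,\dots,I_n,\dots,O)$ the projection onto block $k$, one has $E_k\varphi(w)=G_k(\lambda_1)w_k$ and $G_k(\lambda_1+\lambda_2)=q^{\lambda_2}G_k(\lambda_1)+[\lambda_2]E_k$.

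\textbf{Steps 2--3 with this $\varphi$.} This is where the hypotheses are really used, and it goes through as follows.
\begin{enumerate}[(a)]
\item By $(**)$, $\im\varphi=\bigoplus_j\bigl(\sum_{l\neq j}\im A_l+\im(A_j+q^{-\lambda_1}[\lambda_1]I)\bigr)=\bbC^{mn}$. Also $\Ker\varphi=\bigoplus_j\Ker G_j(\lambda_1)$.
\item By $(*)$, a vector of $\mathcal{K}+\mathcal{K}_\infty(\lambda_1)$ supported in block $j$ has its block in $\Ker A_j$. Hence $\varphi$ maps the lift of $\mathcal{K}'$ into $\mathcal{K}$.
\item Let $w$ lie in the lift of $\mathcal{K}'_\infty(\lambda_2)$, with $[\lambda_2]\neq0$. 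Using $(*)$ again, $(q^{\lambda_1+\lambda_2}A+[\lambda_1+\lambda_2]I)\varphi(w)\in\mathcal{K}$. If $[\lambda_1+\lambda_2]\neq0$, the Lemma in the proof of Theorem \ref{thm:qmcinterp} gives $\varphi(w)\in\mathcal{K}+\mathcal{K}_\infty(\lambda_1+\lambda_2)$. If $[\lambda_1+\lambda_2]=0$, the same conclusion follows from $(*)$ with $\tau=0$.
\item Conversely, let $p=\varphi(w)\in\mathcal{K}_\infty(\lambda_1+\lambda_2)$. Then $w$ splits as $\bigl(w_k+\tfrac{q^{\lambda_2}}{[\lambda_2]}p\bigr)_k\in\bigoplus_k\Ker G_k(\lambda_1)$ plus the constant vector $\bigl(-\tfrac{q^{\lambda_2}}{[\lambda_2]}p\bigr)_k\in\Ker\sum_k\tilde H_k$.
\item The preimage of $\mathcal{K}$ lies in the lift of $\mathcal{K}'$. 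Together with surjectivity, (d) identifies $\varphi^{-1}(\mathcal{K}+\mathcal{K}_\infty(\lambda_1+\lambda_2))$ with the lifted middle subspace.
\end{enumerate}

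\textbf{Degenerate parameters.} Your treatment here also needs correcting. First, $q^\lambda$ never vanishes. Second, $[\lambda_2]=0$ does reduce to Proposition \ref{prop:AT0}, applied to $mc_{\lambda_1}$ of the equation (which still satisfies $(**)$), because $mc_\lambda$ depends only on $q^\lambda$. But the case you omit, $[\lambda_1+\lambda_2]=0$ (that is, $mc_{-\lambda}\circ mc_\lambda$), does not follow from Proposition \ref{prop:AT0}. It needs the $(*)$ argument in (c).

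\textbf{Comparison with the paper.} The paper does not prove this theorem; it is quoted from Arai--Takemura. Its own additivity statement, the Corollary after Theorem \ref{thm:qmcinterp}, comes from factorizing through $\mathcal{ML}_q^{\pm}$ and the cancellation $\mathcal{ML}_q^{+}\circ\mathcal{ML}_q^{-}=\mathrm{id}$. That avoids intertwiners altogether, but only under irreducibility and a non-exceptionality assumption. A corrected version of your direct argument is what the stated $(*)$, $(**)$ generality requires.
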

Combining the above results, we see that, if equation \eqref{eq:Neqn} satisfies \eqref{eq:DRstar1} and \eqref{eq:DRstar2}, the $q$-middle convolution $mc_{\lambda}$ is invertible (up to equivalence) for any $\lambda\in\bbC$. 
Indeed, the inverse transformation is given by $mc_{-\lambda}$:
    \begin{equation}
        mc_{\lambda}\circ mc_{-\lambda}=mc_{-\lambda}\circ mc_{\lambda}=mc_{\lambda-\lambda}=mc_{0}=\mathrm{id.}
    \end{equation}

\subsection{$q$-middle Laplace transform and $q$-middle convolution}

In this subsection, we give an interpretation of the $q$-middle convolution for equation \eqref{eq:Neqn} from the point of view of the $q$-middle Laplace transform. 
As a preparation, we define an operation for equations of the form 
    \begin{equation}\label{eq:Neqn2d}
        D_{x}^{-}u=\left(A_{\infty}+\sum_{i=1}^{m}\frac{A_i}{x-t_i}\right)u
    \end{equation}
with $t_j=0$ for some $j$.
For this equation, we set 
    \begin{equation}
        v(x)=x^{\alpha}u(x)\quad (\alpha\in\bbC).
    \end{equation}
Then it satisfies
    \begin{equation}\label{eq:Eqbaddition2d}
        D_{x}^{-}v=\left(q^{-\alpha}A_{\infty}+\sum_{\substack{i=1 \\i\neq j}}^{m}\frac{q^{-\alpha}A_i}{x-t_i}+\frac{q^{-\alpha}A_{j}-q[-\alpha]I}{x}\right)v.
    \end{equation}
Using this correspondence, we define the following.
\begin{definition}\label{def:addition2}
      We call the operation that sends the equation \eqref{eq:Neqn2d} with $t_j=0$ to the equation \eqref{eq:Eqbaddition2d} the \emph{addition} for $t_j=0$ with a parameter $\alpha$ and denote it by $add_{0}^{\alpha}$.   
\end{definition}
    In the differential setting, the middle convolution can be
    expressed in terms of Harnad duality and addition
    (see \cite[Appendix~A]{Boalch2009} and
\cite[Section~6]{Yamakawa2011}).
    The first author established a corresponding decomposition
    in terms of the middle Laplace transform and addition
    (see \cite[Section~5]{Adachi2026arXiv}).
    The following theorem gives a $q$-analog of this decomposition,
    with an additional rescaling of the independent variable.
\begin{theorem}\label{thm:qmcinterp}
    Suppose that $\lambda\in\bbC$ satisfies $[\lambda]\neq0$.
    For equation \eqref{eq:Neqn}, 
    \begin{equation}\label{eq:mclfctr}
        mc_{\lambda}=\mathcal{ML}_{q}^{-}\circ(x\mapsto q^{-\lambda} x)\circ add_0^{-\lambda}\circ\mathcal{ML}_{q}^{+}
    \end{equation}
    as operations, where $``="$ is understood up to equivalence.
    Here $x\mapsto q^{-\lambda} x$ denotes the transform ($q^{-\lambda}$-shift) of the independent variable.
\end{theorem}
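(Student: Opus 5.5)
The plan is a direct matrix computation: push the coefficient data of \eqref{eq:Neqn} through the four operations on the right-hand side of \eqref{eq:mclfctr} one at a time, and compare the result with $(\bar G_1,\dots,\bar G_m)$ on $\bbC^{mn}/(\calK+\calK_\infty(\lambda))$. No analytic (Jackson integral) argument is needed, since all operations are defined at the level of coefficient matrices.

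\emph{Step 1 ($\mathcal{ML}_q^+$).} Equation \eqref{eq:Neqn} is \eqref{eq:Eqn} with $A_\infty=O$, so $l=1$, $\tau_1=0$, $n_1=n$, $E_{n_1}=I_n$ and $B_1=-A$. Hence $\mathcal{ML}_q^+$ gives
\[
D_x^-v=\Bigl(-q\bar S+\frac{-q\bar A}{x}\Bigr)v
\]
on $\bbC^{mn}/\calK$. Since $\calK=\bigoplus_i\Ker A_i$ is compatible with the block decomposition, $S$ is semisimple on the quotient. I will fix a basis adapted to the images $\bar V_i$ of the $i$-th blocks $V_i=\{0\}\oplus\cdots\oplus\bbC^n\oplus\cdots\oplus\{0\}$. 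In this basis $\bar S=\mathrm{diag}(t_1I,\dots,t_mI)$, with blocks of sizes $n-\dim\Ker A_i$, and the projection $\bar E_i$ onto $\bar V_i$ is induced by the block projection.

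\emph{Step 2 ($add_0^{-\lambda}$).} By \eqref{eq:Eqbaddition2d} with $\alpha=-\lambda$, this gives $A_\infty=-q^{\lambda+1}\bar S$ and residue $-q(q^{\lambda}\bar A+[\lambda]I)$ at $0$.

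\emph{Step 3 (rescaling $x\mapsto q^{-\lambda}x$).} Substituting $\tilde v(x)=v(cx)$ gives $D^-_x\tilde v(x)=c\,(D^-v)(cx)$. Thus the constant term is multiplied by $c$, while residues at $0$ are unchanged. With the appropriate choice of $c=q^{\mp\lambda}$ (the sign to be fixed carefully), the system becomes
\[
D_x^-w=\Bigl(-q\bar S+\frac{A'_1}{x}\Bigr)w,\qquad A'_1=-q\bigl(q^{\lambda}\bar A+[\lambda]I\bigr).
\]
Here the constant term is diagonal with distinct eigenvalues $-qt_i$, as required by \eqref{eq:Ainf}.

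\emph{Step 4 ($\mathcal{ML}_q^-$).} Now $m'=1$, $t'_1=0$, so the new $S$ is the zero matrix and $U=w/x$ lives in the same space. The $\tau_j/q$ in \eqref{eq:invqmLeq} equal $-t_j$, so the poles sit at $x=t_j$. The residues are
\[
B'_j/q=-\bar E_jA'_1/q=\bar E_j\bigl(q^{\lambda}\bar A+[\lambda]I\bigr),
\]
which is exactly the matrix induced by $G_j=E_j(q^{\lambda}A+[\lambda]I)$ from \eqref{eq:convmat}. The kernel to be quotiented is $\calK'=\Ker A'_1=\Ker(q^\lambda\bar A+[\lambda]I)\subset\bbC^{mn}/\calK$. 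So the final system is $D_xw=\sum_j\bar{\bar G}_j/(x-t_j)\,w$ on $(\bbC^{mn}/\calK)/\calK'$.

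\emph{Step 5 (identifying the quotient).} The step I expect to be the real obstacle is proving that the preimage of $\calK'$ in $\bbC^{mn}$ is $\calK+\calK_\infty(\lambda)$. Then the iterated quotient is $\bbC^{mn}/(\calK+\calK_\infty(\lambda))$ and the induced actions coincide with $\bar G_j$. The inclusion $\supseteq$ is clear. For $\subseteq$, suppose $w:=(q^\lambda A+[\lambda]I)v\in\calK$. Then $Aw=0$, since each block row of $A$ is $\sum_iA_iw_i$. Hence $(q^\lambda A+[\lambda]I)w=[\lambda]w$. Using $[\lambda]\neq0$, it follows that $v-w/[\lambda]\in\calK_\infty(\lambda)$, so $v\in\calK+\calK_\infty(\lambda)$. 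This is precisely where the hypothesis $[\lambda]\ne0$ enters.

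Finally, I will check that all choices of bases (the adapted basis in Step 1 and the complements in Remark~\ref{rem:projection}) only change the result by a constant gauge transformation. Hence the equality holds up to equivalence. I will also double-check the sign conventions in Steps 2–3 so that the constant term is exactly $-q\bar S$ and the poles land at $t_j$ rather than at $q^{\pm\lambda}t_j$.
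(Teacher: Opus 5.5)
Your proposal is correct and follows the paper's proof essentially step for step. The paper uses the same four matrix computations ($B_1=-A$, a diagonalizing basis for $\bar S$, and the rescaling $w(x)=v_{\mathrm{add}}(q^{-\lambda}x)$, which your Step 3 computation already forces, i.e.\ $c=q^{-\lambda}$), the same key identity $P_\lambda^{-1}(\calK)=\calK+\Ker P_\lambda$ proved by subtracting $w/[\lambda]$, and then identifies the iterated quotient with $\bbC^{mn}/(\calK+\calK_\infty(\lambda))$ and the induced maps with $\bar G_i$.
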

\begin{proof}
    For equation \eqref{eq:Neqn}, we apply the $q$-Laplace transform (Definition \ref{def:qLaplace}).
    Then we have
    \begin{equation}
        D_{x}^{-}V=q\left(-S+\frac{B_1}{x}\right)V,
    \end{equation}
    where 
    \begin{equation}
     S=\begin{pmatrix}
            t_1 I_n & & & \\
	          & t_2 I_n & & \\
	         & & \ddots & \\
	         & & & t_{m} I_n 
        \end{pmatrix},
    \quad 
    B_{1}=-A=-\begin{pmatrix}
	A_{1} & A_2 & \cdots & A_{m} \\
	A_{1} & A_2 & \cdots & A_{m} \\
	\vdots & \vdots & & \vdots \\
	A_{1} & A_2 & \cdots & A_{m}
	\end{pmatrix}.
    \end{equation}
Let $\calK$ be the subset defined by \eqref{eq:K}, that is, 
    \begin{equation}
        \calK=\bigoplus_{i=1}^{m}\Ker A_{i}.
    \end{equation}
Then, as explained above, the matrices $(S,B_1)$ induce the linear actions $(\bar{S},\bar{B}_1)$ on the quotient space $\bbC^{mn}/\calK$.
Here we take the basis of $\bbC^{mn}/\calK\cong\bbC^{r}$ ($r=mn-\dim \calK$) so that the matrix representation of $\bar{S}$ is given by
    \begin{equation}
        \bar{S}=\begin{pmatrix}
            t_1 I_{k_1} & & & \\
            & t_2 I_{k_2} & & \\
            & & \ddots & \\
            & & & t_m I_{k_m}
        \end{pmatrix},
        \quad k_i=n-\dim\Ker A_i ~~(1\le i\le m).
    \end{equation}
Then we obtain the $q$-middle Laplace transform of \eqref{eq:Neqn}
    \begin{equation}
        D_x^{-}v=q \left(-\bar{S}+\frac{\bar{B}_1}{x}\right)v.
    \end{equation}
For this equation, we apply the addition $add_{0}^{-\lambda}$.
Then we have
    \begin{equation}
        D_{x}^{-}v_{\mathrm{add}}=\left(-q^{\lambda+1}\bar{S}+\frac{q^{\lambda+1}\bar{B}_1-q[\lambda]I}{x}\right)v_{\mathrm{add}}.
    \end{equation}
We then apply the transform of the independent variable $x\mapsto q^{-\lambda} x$. 
Accordingly, we have the transform of the operator $D_{x}^{-}\mapsto q^{\lambda}D_{x}^{-}$. 
Indeed, setting 
    \begin{equation}
        w(x)=v_{\mathrm{add}}(q^{-\lambda}x)
    \end{equation}
we have
    \begin{equation}
    \begin{aligned}
    D_{x}^{-}v_{\mathrm{add}}(x)&=\frac{v_{\mathrm{add}}(q^{-1}x)-v_{\mathrm{add}}(x)}{(q^{-1}-1)x} \\
    &\mapsto\frac{v_{\mathrm{add}}(q^{-1}q^{-\lambda}x)-v_{\mathrm{add}}(q^{-\lambda}x)}{(q^{-1}-1)q^{-\lambda}x} \\
    &=q^{\lambda}\frac{w(q^{-1}x)-w(x)}{(q^{-1}-1)x}\\
    &=q^{\lambda}D_{x}^{-}w(x).
    \end{aligned}
        \end{equation}
Therefore, we obtain
    \begin{equation}\label{eq:Dxminusw}
        D_{x}^{-}w=\left(-q\bar{S}+\frac{q^{\lambda+1}\bar{B}_1-q[\lambda]I}{x}\right)w.
    \end{equation}
Applying the inverse $q$-Laplace transform (Definition \ref{def:qinvLaplace}) to this equation, we have 
    \begin{equation}
        D_{x}Z=\left(\sum_{i=1}^{m}\frac{C_i}{x-t_i}\right)Z
    \end{equation}
where 
    \begin{equation}
        C_i=-E_{k_i}(q^{\lambda}\bar{B}_1-[\lambda]I)
    \end{equation}
with $E_{k_i}=\mathrm{diag}[O_{k_1},\dots,I_{k_i},\dots,O_{k_m}]$. 
Considering the projection to the quotient space $\bbC^{r}/\Ker(q^{\lambda+1}\bar{B}_1-q[\lambda]I)$, we obtain the inverse $q$-middle Laplace transform of \eqref{eq:Dxminusw} as follows:
    \begin{equation}\label{eq:eqnz}
        D_{x}z=\left(\sum_{i=1}^{m}\frac{\bar{C}_i}{x-t_i}\right)z
    \end{equation}
where $\bar{C}_i$ denotes the matrix that represents the induced action of $C_i$ on $\bbC^{r}/\Ker(q^{\lambda+1}\bar{B}_1-q[\lambda]I)=\bbC^{r}/\Ker(q^{\lambda}\bar{B}_1-[\lambda]I)$. 

We shall show that equation \eqref{eq:eqnz} is equivalent to equation \eqref{eq:qmceqn}, which is the $q$-middle convolution image of \eqref{eq:Neqn}.
Recall that the subspace $\calK_{\infty}(\lambda)$ in \eqref{eq:qmcKinf} for \eqref{eq:Neqn} is given by
    \begin{equation}
        \calK_{\infty}(\lambda)=\Ker(q^{\lambda}A+[\lambda]I)=\Ker(q^{\lambda+1}B_1-q[\lambda]I)=\Ker(q^{\lambda}B_1-[\lambda]I).
    \end{equation}
Now we express the linear transform $P_{\lambda}:=q^{\lambda}B_1-[\lambda]I$ on $\bbC^{mn}$ and the induced linear transform $\bar{P}_{\lambda}=q^{\lambda}\bar{B}_1-[\lambda]I$ on $\bbC^{mn}/\calK$. 
Then it holds that 
    \begin{equation}\label{eq:kerP}
    \Ker \bar{P}_{\lambda}=P_{\lambda}^{-1}(\calK)/\calK.
    \end{equation}
\begin{lemma}
    If $[\lambda]\neq 0$, it holds that
    \begin{equation}\label{eq:lemPlambda}
    P_{\lambda}^{-1}(\calK)=\calK+\Ker P_{\lambda}.
    \end{equation}
\end{lemma}
\begin{proof}
    Let us show $P_{\lambda}^{-1}(\calK)\subset\calK+\Ker P_{\lambda}$. 
    For $v\in P_{\lambda}^{-1}(\calK)$, we set 
    \begin{equation}
        k=-\frac{1}{[\lambda]}P_{\lambda}(v)\in\calK.
    \end{equation} 
    Since $B_1k=0$, we have
    \begin{equation}
        P_{\lambda}(k)=(q^{\lambda}B_1-[\lambda]I)k=-[\lambda]k=P_{\lambda}(v)
    \end{equation}
    and hence $P_{\lambda}(k-v)=0$. 
    This means $v\in \calK+\Ker P_{\lambda}$, and hence $P_{\lambda}^{-1}(\calK)\subset\calK+\Ker P_{\lambda}$.
    The inclusion $P_{\lambda}^{-1}(\calK)\supset\calK+\Ker P_{\lambda}$ is obvious. 
\end{proof}
Thanks to this lemma, equation \eqref{eq:kerP} turns into
    \begin{equation}
        \Ker(q^{\lambda}\bar{B}_{1}-[\lambda]I)=(\calK+\Ker P_{\lambda})/\calK=(\calK+\calK_{\infty}(\lambda))/\calK.
    \end{equation}
Then we have the isomorphism
    \begin{equation}
        \begin{aligned}
           \bbC^{r}/\Ker(q^{\lambda}\bar{B}_1-[\lambda]I)&\cong   (\bbC^{mn}/\calK)/\Ker(q^{\lambda}\bar{B}_1-[\lambda]I) \\  
           &=  \frac{\bbC^{mn}/\calK}{(\calK+\calK_{\infty}(\lambda))/\calK} \\
           &\cong \bbC^{mn}/(\calK+\calK_{\infty}(\lambda))
        \end{aligned}
    \end{equation}
as vector spaces. 
Chasing this isomorphism, we see that the action $\bar{C}_{i}$ on $\bbC^{r}/\Ker(q^{\lambda+1}\bar{B}_1-q[\lambda]I)$ induced by $C_{i}$ is the same as the action on $\bbC^{mn}/(\calK+\calK_{\infty}(\lambda))$ induced by the matrix
    \begin{equation}
        \begin{aligned}
            \begin{pmatrix}
                O_n & & & & \\
                & \ddots & & & \\
                & & I_n & & \\
                & & & \ddots& \\
                & & & & O_n
            \end{pmatrix}
            &\begin{pmatrix}
                q^{\lambda}A_1+[\lambda]I & q^{\lambda}A_2 & \cdots & q^{\lambda}A_{m} \\
                q^{\lambda}A_1 & q^{\lambda}A_2+[\lambda]I & \cdots & q^{\lambda}A_{m} \\
                \vdots & & \ddots & \vdots \\
                q^{\lambda}A_1 & q^{\lambda}A_2 & \cdots & q^{\lambda}A_{m}+[\lambda]I
            \end{pmatrix} \\
            &=\begin{pmatrix}
                 &   & O& &    \\
                 &    & \vdots& &    \\
                 &   & O& &    \\
            q^{\lambda}A_{1} & \cdots &  q^{\lambda}A_{i}+[\lambda]I  & \cdots & q^{\lambda}A_{m}\\
                 &    & O& &    \\
                 &    & \vdots& &    \\
                 &    & O &    \\
        \end{pmatrix}
        ~\text{$i$-th block}
        \\
        &=G_i.
        \end{aligned}
    \end{equation}
This means that there exists an invertible matrix $P$ such that 
    \begin{equation}
        (\bar{C}_1,\bar{C}_2,\ldots,\bar{C}_{m})=(P^{-1}\bar{G}_1P,P^{-1}\bar{G}_2P,\ldots,P^{-1}\bar{G}_{m}P).
    \end{equation}
This shows equation \eqref{eq:eqnz} is equivalent to equation \eqref{eq:qmceqn}.
\end{proof}
\begin{remark}\label{rem:lambda0}
    If equation \eqref{eq:Neqn} satisfies 
    \begin{equation}\label{eq:capker0}
        \bigcap_{i=1}^{m}\Ker A_{i}=\{0\},
    \end{equation}
    then identity \eqref{eq:lemPlambda} holds even if $[\lambda]=0$.
    Indeed, in this case $P_{\lambda}=B_1$ and $\calK\subset\Ker P_{\lambda}$. 
    Thus the identity \eqref{eq:lemPlambda} reduces to $P_{\lambda}^{-1}(\calK)=\Ker P_{\lambda}$, which follows directly from \eqref{eq:capker0}. 
    Therefore, the conclusion of Theorem \ref{thm:qmcinterp} holds for any $\lambda\in\bbC$. 
    In particular, this is the case if equation \eqref{eq:Neqn} satisfies condition \eqref{eq:DRstar1}, or if it is irreducible and non-exceptional.
\end{remark}
We now derive counterparts of Proposition \ref{prop:AT0} and Theorem \ref{thm:AT1} in the irreducible and non-exceptional case from the viewpoint of the $q$-middle Laplace transform.
    \begin{corollary}
    Suppose that equation \eqref{eq:Neqn} is irreducible and non-exceptional. 
    Then
    \begin{equation}\label{eq:mc0Lap}
        mc_{0}=\mathrm{id}.
    \end{equation} 
    Furthermore, for $\lambda \in \mathbb{C}$ such that the resulting equation of $add_0^{-\lambda}\circ\mathcal{ML}_{q}^{+}$ is non-exceptional, the resulting equation of $mc_{\lambda}$ is also irreducible and non-exceptional. 
        In this case,
            \begin{equation}\label{eq:qmcadditive}
            mc_{\mu}\circ mc_{\lambda}=mc_{\lambda+\mu}
            \end{equation}
        holds for any $\mu\in\bbC$.
    Here the symbol $``="$ is understood up to equivalence. 
\end{corollary}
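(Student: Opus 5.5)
The plan is to deduce everything from the factorization of Theorem \ref{thm:qmcinterp}, which by Remark \ref{rem:lambda0} holds for every $\lambda\in\bbC$ once \eqref{eq:Neqn} is irreducible and non-exceptional. We then apply Theorem \ref{thm:qMLproperty} to the outer factors, using that addition and rescaling of $x$ are invertible and preserve irreducibility.

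\emph{The case $\lambda=0$.} Here $add_0^{0}$ is the identity, and the rescaling $x\mapsto q^{0}x$ is trivial. So \eqref{eq:mclfctr} reduces to $mc_0=\mathcal{ML}_q^{-}\circ\mathcal{ML}_q^{+}$. We regard \eqref{eq:Neqn} as an equation of the form \eqref{eq:Eqn} with $A_\infty=O$, that is, $l=1$ and $\tau_1=0$. Irreducibility of the tuple $(A_\infty,A_1,\dots,A_m)$ is the same as irreducibility of $(A_1,\dots,A_m)$, so the first item of Theorem \ref{thm:qMLproperty} gives $mc_0=\mathrm{id}$ up to equivalence.

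\emph{General $\lambda$, irreducibility.} By the second item of Theorem \ref{thm:qMLproperty}, $\mathcal{ML}_q^{+}$ of \eqref{eq:Neqn} is irreducible and non-exceptional; it has the form $D_x^-v=q(-\bar S+\bar B_1/x)v$. Applying $add_0^{-\lambda}$ replaces the tuple $(-q\bar S,q\bar B_1)$ by $(-q^{\lambda+1}\bar S,\,q^{\lambda+1}\bar B_1-q[\lambda]I)$. This is a nonzero rescaling plus a scalar shift, so common invariant subspaces are unchanged and irreducibility is preserved. Exceptionality is not automatically preserved, since the residue could become $0$ when $r=1$; this is exactly why the hypothesis on $\lambda$ is imposed. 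The rescaling $x\mapsto q^{-\lambda}x$ multiplies the coefficients by nonzero scalars, and the pole stays at $0$, so it preserves both properties. The resulting equation \eqref{eq:Dxminusw} is therefore irreducible and non-exceptional of the form \eqref{eq:downEqn}. The last sentence of Theorem \ref{thm:qMLproperty}, for $\mathcal{ML}_q^-$, then shows that $mc_\lambda$ of \eqref{eq:Neqn} is irreducible and non-exceptional. It has no constant term, so it is again of the form \eqref{eq:Neqn}.

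\emph{Additivity.} Write $E$ for \eqref{eq:Neqn} and set $F:=add_0^{-\lambda}\circ\mathcal{ML}_q^+(E)$, rescaled by $x\mapsto q^{-\lambda}x$, so that $mc_\lambda(E)=\mathcal{ML}_q^-(F)$ with $F$ irreducible and non-exceptional. Since $mc_\lambda(E)$ is irreducible and non-exceptional, Remark \ref{rem:lambda0} lets us apply \eqref{eq:mclfctr} to it with parameter $\mu$. This gives
\[
mc_\mu\circ mc_\lambda(E)=\mathcal{ML}_q^-\circ R_\mu\circ add_0^{-\mu}\circ\mathcal{ML}_q^+\circ\mathcal{ML}_q^-(F),
\]
where $R_\mu$ denotes $x\mapsto q^{-\mu}x$. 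We use the ``similar statement'' of Theorem \ref{thm:qMLproperty} for equations \eqref{eq:downEqn}, namely $\mathcal{ML}_q^+\circ\mathcal{ML}_q^-=\mathrm{id}$ on irreducible non-exceptional $F$. This is inherited from the differential version, Theorem \ref{thm:differentialML}, by the same rescaling argument. The inner pair then cancels.

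It remains to check two commutation facts, which is a direct computation. First, $R_\mu\circ add_0^{-\mu}\circ R_\lambda\circ add_0^{-\lambda}=R_{\lambda+\mu}\circ add_0^{-\lambda-\mu}$ on equations of the form $D_x^-v=(C_\infty+C_0/x)v$. Second, all of these steps commute with equivalence. For the first identity, both sides correspond to $v\mapsto x^{-\lambda-\mu}v$ composed with a rescaling of $x$. The delicate point is the bookkeeping of the factors $q^{\lambda}$: the rescaling changes $x^{-\mu}$ by a constant, which is harmless, and one uses $q^{\lambda}[\mu]+[\lambda]=[\lambda+\mu]$. Granting this, the right-hand side equals \eqref{eq:mclfctr} for $\lambda+\mu$ applied to $E$, which is $mc_{\lambda+\mu}(E)$.

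The main obstacle I expect is the hidden step $\mathcal{ML}_q^+\circ\mathcal{ML}_q^-=\mathrm{id}$. One must verify that the $q$-rescalings (the factors $q$ and $1/q$, and the poles moving to $-\tau_j/q$) cancel in this order as well. If they do not cancel exactly, the discrepancy should be a further rescaling of $x$, to be absorbed into the $R$ factors.
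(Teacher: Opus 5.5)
Your proposal is correct and follows the paper's proof essentially step by step. Both arguments write $mc_0=\mathcal{ML}_q^-\circ\mathcal{ML}_q^+$ via Remark \ref{rem:lambda0}, pass irreducibility through addition and rescaling, cancel the inner pair $\mathcal{ML}_q^+\circ\mathcal{ML}_q^-$, and then use the composition identity for addition and rescaling. The obstacle you flag resolves exactly, with no extra rescaling needed: $\mathcal{ML}_q^-$ introduces residues $\bar{B}_j/q$ and poles $-\tau_j/q$, and the factor $q$ in $\mathcal{ML}_q^+$ returns both the constant term $\mathrm{diag}(\tau_j I)$ and the residues to their differential counterparts. Your explicit appeal to Remark \ref{rem:lambda0} for the parameter $\mu$ fills in a step the paper leaves implicit.
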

\begin{proof}
    In the following, all identities between operations are understood up to equivalence. 
    Suppose that equation \eqref{eq:Neqn} is irreducible and non-exceptional. 
    Then, by Theorem \ref{thm:qmcinterp} and Remark \ref{rem:lambda0}, we have
    \begin{equation}
        mc_0=\mathcal{ML}_q^{-}\circ\mathcal{ML}_{q}^{+}.
    \end{equation}
    Hence the statement \eqref{eq:mc0Lap} follows from the inverse formula of $\mathcal{ML}_{q}$ (Theorem \ref{thm:qMLproperty}). 

    We next show the remaining statements. 
    We refer to the irreducible and non-exceptional equation \eqref{eq:Neqn} as $(E)$ only in this proof.
    By Theorem \ref{thm:qMLproperty}, the $q$-middle Laplace transformed equation $\mathcal{ML}_{q}^{+}(E)$ is also irreducible and non-exceptional. 
    Take $\lambda\in\mathbb{C}$ such that $add_{0}^{-\lambda}\circ \mathcal{ML}_{q}^{+}(E)$ is non-exceptional. 
    Clearly, the addition preserves irreducibility of equations. 
    The $q^{-\lambda}$-shift $x\mapsto q^{-\lambda}x$ also keeps irreducibility and non-exceptionality. 
    Therefore the equation $(x\mapsto q^{-\lambda}x)\circ add_{0}^{-\lambda}\circ \mathcal{ML}_{q}^{+}(E)$ also fulfills the assumption of Theorem \ref{thm:qMLproperty}. 
    Hence the equation $mc_{\lambda}(E)$ is also irreducible and non-exceptional. 
    Finally, for any $\mu\in\bbC$, we have
    \begin{equation}
    \begin{aligned}
        mc_{\mu}\circ mc_{\lambda}&=\mathcal{ML}_{q}^{-}\circ(x\mapsto q^{-\mu}x)\circ add_{0}^{-\mu}\circ \mathcal{ML}_{q}^{+}\circ\mathcal{ML}_{q}^{-}\circ(x\mapsto q^{-\lambda}x)\circ add_{0}^{-\lambda}\circ\mathcal{ML}_{q}^{+} \\
        &=\mathcal{ML}_{q}^{-}\circ(x\mapsto q^{-\mu}x)\circ add_{0}^{-\mu}\circ (x\mapsto q^{-\lambda}x)\circ add_{0}^{-\lambda}\circ\mathcal{ML}_{q}^{+}.
    \end{aligned}
    \end{equation}
Since
    \begin{equation}
        (x\mapsto q^{-\mu}x)\circ add_{0}^{-\mu}\circ (x\mapsto q^{-\lambda}x)\circ add_{0}^{-\lambda}=(x\mapsto q^{-(\lambda+\mu)}x)\circ add_{0}^{-(\lambda+\mu)},
    \end{equation}
we obtain
    \begin{equation}
        mc_{\mu}\circ mc_{\lambda}=\mathcal{ML}_{q}^{-}\circ (x\mapsto q^{-(\lambda+\mu)}x)\circ add_{0}^{-(\lambda+\mu)}\circ\mathcal{ML}_{q}^{+}=mc_{\lambda+\mu}.
    \end{equation}
\end{proof}

\section{Applications to $q$-hypergeometric equations}\label{sec:qHGF}

In this section, we apply the $q$-middle Laplace transform to the confluent $q$-hypergeometric equation and its variant \cite{matsunawa2019variants}.
\subsection{$q$-Kummer equation}
We consider integral solutions for the $q$-Kummer equation, i.e. the confluent $q$-hypergeometric equation 
\begin{equation}\label{eq:qKummer}
    [(1-T_x)(1-cq^{-1}T_x)+xT_x(1-aT_x)]u=0,
\end{equation}
satisfied by the confluent $q$-hypergeometric series
\begin{align}
    {}_1\varphi_1\left(\begin{array}{c}
         a \\
         c
    \end{array};x\right):=\sum_{n=0}^\infty \frac{(a)_n}{(c)_n(q)_n}(-1)^n q^{n(n-1)/2}x^n.
\end{align}
It is well-known that the $q$-Kummer equation \eqref{eq:qKummer} has an integral solution in the following form:
\begin{align}
    \int_C s^{\alpha-1}\frac{(qs)_\infty}{(cs/a)_\infty}(xs)_\infty \,d_qs,
\end{align}
where $a=q^\alpha$ and $C$ is a suitable path.
We reconstruct this integral solution using the $q$-(middle) Laplace transform in this subsection.

Let $e=q^\rho$ and
\begin{equation}
    u=u(x)=x^\rho \frac{(d_1x)_\infty}{(d_2x)_\infty}.
\end{equation}
From direct calculation, we have
\begin{align}
    D_xu= \left(\frac{A_1}{x}+\frac{A_2}{x-d_1^{-1}}\right)u
\end{align}
where
\begin{align}
    A_1=\frac{1-e}{1-q},\quad A_2=\frac{e(1-d_2/d_1)}{1-q}.
\end{align}
We note that this equation is in the form of \eqref{eq:Eqn} with $A_\infty=0$.
We put
\begin{align}
    U=\left(\frac{u}{x},\frac{u}{x-d_1^{-1}}\right)^{T}.
\end{align}
Applying $L_q$, we get
\begin{align}
    \label{eq:qKummerLaplace}
    D_x^-V=q\left(-S+\frac{B_1}{x}\right)V,
\end{align}
where
\begin{align}
    V=\int_C U(s)(q(1-q)xs)_\infty\,d_qs,\ S=\begin{pmatrix}
        0&\\&d_1^{-1}
    \end{pmatrix},\ B_1=-\begin{pmatrix}
        A_1&A_2\\A_1&A_2
    \end{pmatrix}.
\end{align}
The path $C$ should be taken so that
\begin{equation}
    \left[U(s)((1-q)xs)_{\infty}\right]_{s\in\partial C}=0.
\end{equation}
For instance, this condition holds for $C=[0,1/(d_1q)]$ when $|q^{\rho-1}|<1$ and $d_2 \neq d_1 q^{1-k}$ ($k=0,1,2,\ldots$).
Calculating the equation satisfied by the first component of $V$ from \eqref{eq:qKummerLaplace}, we have
\begin{align}
    e(d_2-(1-q)x)V_1+(-d_1-d_2e+(1-q)x)T_x^{-1}V_1+d_1T_x^{-2}V_1=0.
\end{align}
Applying $T_x^2$ to both sides, and putting $z=q(1-q)x$, $d_1=q$, $d_2=c/a$, $e=a$, we finally get
\begin{align}
    [(1-T_z)(1-cq^{-1}T_z)+zT_z(1-aT_z)]V_1=0.
\end{align}
In conclusion, we have
\begin{proposition}\label{prop:qhyp1}
    We put
    \begin{align}
        u(x)=x^{\alpha}\frac{(qx)_\infty}{(cx/a)_\infty},
    \end{align}
    where $q^\alpha=a$.
    We suppose
    \begin{equation}
        \left[\left(\frac{u(s)}{s},\frac{u(s)}{s-q^{-1}}\right)^{T}(xs/q)_{\infty}\right]_{s\in\partial C}=0.
    \end{equation}
    Then we have
    \begin{equation}
        [(1-T_x)(1-cq^{-1}T_x)+xT_x(1-aT_x)]\int_C s^{\alpha-1}\frac{(qs)_\infty}{(cs/a)_\infty}(xs)_\infty \,d_qs=0.
    \end{equation}
\end{proposition}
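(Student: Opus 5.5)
The plan is to specialize the computation carried out just before the statement, with $d_1=q$, $d_2=c/a$, $e=a$ (so $\rho=\alpha$), and then remove the rescaling $z=q(1-q)x$ by a change of the Jackson integration variable.

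First, with these values the function $u(x)=x^{\alpha}(qx)_\infty/(cx/a)_\infty$ satisfies $D_xu=(A_1/x+A_2/(x-q^{-1}))u$, where
\[
A_1=\frac{1-a}{1-q},\qquad A_2=\frac{a(1-c/(aq))}{1-q}.
\]
This is a direct check of $u(qx)/u(x)=q^{\alpha}(1-qx)/(1-cx/a)$. Second, the vector $U=(u/x,u/(x-q^{-1}))^T$ satisfies the $q$-Okubo type equation \eqref{eq:extOkubo}. The Jackson transform \eqref{eq:V(x)int}, $V(x)=\int_C U(s)(q(1-q)xs)_\infty\,d_qs$, then satisfies \eqref{eq:qKummerLaplace}, provided the boundary term $[U(s)((1-q)xs)_\infty]_{s\in\partial C}$ vanishes. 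To verify this I would apply $D_x^-$ to the kernel: using $D^-_x(q(1-q)xs)_\infty=-qs\,(q(1-q)xs)_\infty$, the $x$-action becomes multiplication by $s$. I would then perform $q$-summation by parts in $s$, so that $s\,U(s)$ is traded for $D_s$ acting on $U$; the boundary term is exactly what this summation by parts produces.

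Third, I would eliminate $V_2$ from the $2\times2$ system \eqref{eq:qKummerLaplace}. The first row reads $D_x^-V_1=q(B_1/x)_{11}V_1+q(B_1/x)_{12}V_2$, so I would solve it for $V_2$ and substitute into the second row. This yields the displayed second-order equation for $V_1$. Applying $T_x^2$ and substituting $z=q(1-q)x$ then gives the $q$-Kummer operator acting on $V_1$ in the variable $z$, where
\[
V_1=\int_C s^{\alpha-1}\frac{(qs)_\infty}{(cs/a)_\infty}\,(zs)_\infty\,d_qs .
\]
Finally, I would rename $z$ as $x$. In terms of this new $x$, the kernel in the boundary condition becomes $((1-q)x_{\text{old}}s)_\infty=(xs/q)_\infty$, which is exactly the hypothesis in the statement. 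The conclusion is then the claimed identity.

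The main obstacle is the elimination step and the bookkeeping of the constants $q$, $(1-q)$ and $e$ so that the coefficients land exactly on $(1-T_z)(1-cq^{-1}T_z)+zT_zT(1-aT_z)$; in particular one must confirm that $d_1=q$ makes the constant term $d_1T_x^{-2}$ match after $T_x^2$. I would check this by expanding both sides as polynomials in $T$ with coefficients linear in $z$ and comparing the coefficients of $1$, $T$, $T^2$, $zT$ and $zT^2$. A secondary point is justifying the summation by parts, which requires absolute convergence of the Jackson sums; this is where conditions such as $|q^{\alpha-1}|<1$ and $c/a\neq q^{2-k}$ enter for concrete choices of path such as $C=[0,1]$.
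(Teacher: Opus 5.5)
Your proposal is correct and follows the paper's derivation step for step: specialize $d_1=q$, $d_2=c/a$, $e=a$, eliminate $V_2$ from the $2\times 2$ system to get the three-term equation for $V_1$, apply $T_x^2$, rescale $z=q(1-q)x$, and observe that $((1-q)xs)_\infty$ becomes $(xs/q)_\infty$ in the boundary condition. There are two harmless slips, neither of which affects the argument since the proposition assumes the boundary condition: first, the ratio is $u(qx)/u(x)=q^{\alpha}(1-cx/a)/(1-qx)$, not its reciprocal, although your $A_1,A_2$ and the pole at $q^{-1}$ are right; second, $C=[0,1]$ is not an admissible path because $u(1)\neq 0$, and the paper's example $C=[0,1/(d_1q)]$ is $[0,q^{-2}]$, which is where your conditions $|q^{\alpha-1}|<1$ and $c/a\neq q^{2-k}$ actually come from.
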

\begin{remark}
    The integral
    \begin{align}
        \int_C s^{\alpha-1}\frac{(qs)_\infty}{(cs/a)_\infty}(xs)_\infty \,d_qs
    \end{align}
    is a $q$-analog of
    \begin{align}
        \int_Cs^{\alpha-1}(1-s)^{\gamma-\alpha-1}e^{xs}\,ds
    \end{align}
    with $q^\gamma=c$.
\end{remark}

We can also construct integral solutions for \eqref{eq:qKummer} by using $\mathcal{L}_q^-$.
We recall $u$:
\begin{equation}
    u=u(x)=x^\rho \frac{(d_1x)_\infty}{(d_2x)_\infty},\ q^\rho=e.
\end{equation}
We have
\begin{align}
    D_x^-u=\left(\frac{A_1}{x}+\frac{A_2}{x-q/d_2}\right)u
\end{align}
where
\begin{align}
    A_1=\frac{1-e^{-1}}{1-q^{-1}},\ A_2=\frac{1-d_1/d_2}{(1-q^{-1})e}.
\end{align}
Putting $U=\left(\dfrac{u}{x},\dfrac{u}{x-q/d_2}\right)^T$ and applying $L_q^-$, we get
\begin{align}
    \label{eq:qKummerinverseLaplace}
    D_xW=\left(S+\frac{B_1/q}{x}\right)W,
\end{align}
where 
\begin{align}
    W=\int_C U(s)\frac{1}{((1-q)xs)_\infty}\,d_qs,
    \ 
    S=\begin{pmatrix}
        0&\\&q/d_2
    \end{pmatrix},\ 
    B_1=-\begin{pmatrix}
        A_1&A_2\\A_1&A_2
    \end{pmatrix}.
\end{align}
The condition
\begin{equation}
    \left[\frac{U(s/q)}{((1-q)xs)_\infty}\right]_{s\in\partial C}=0
\end{equation}
on the path $C$ holds for $C=[0,q/d_1]$ when $|q^{\rho-1}|<1$ and $d_2 \neq d_1 q^{1-k}$ ($k=0,1,2,\ldots$), and for $C=[0,\xi\infty]$ when $|q^{\rho-1}|<1$ and $\xi\neq q^n/d_2$ ($n\in\mathbb{Z}$).
The second case can be verified by
\begin{align}
    (X q^{-k})_\infty=(X)_\infty(q/X)_k \left(-\frac{X}{q}\right)^kq^{-\binom{k}{2}}\ (k=0,1,2,\ldots).
\end{align}
Calculating the equation satisfied by the first component of $W$ from \eqref{eq:qKummerinverseLaplace}, we have
\begin{align}
    &[c^2T_z^2-c(c+q-qz)T_z+q(c-az)]W_1=0,\\
    &z=(1-q)x,\ d_1=\frac{cq}{a},\ d_2=c,\ e=\frac{c}{a}.
\end{align}
Applying the gauge transform $W_1'=(az/c)_\infty W_1$, we transform this equation into the $q$-Kummer equation.
Therefore, we have
\begin{proposition}\label{prop:qhyp2}
    We put
    \begin{align}
        u(x)=x^{\gamma-\alpha}\frac{(cqx/a)_\infty}{(cx)_\infty},
    \end{align}
    where $q^\alpha=a$, $q^\gamma=c$.
    We suppose
    \begin{equation}
        \left[\left(\frac{u(s/q)}{s/q},\frac{u(s/q)}{s/q-q/c}\right)^{T}\frac{1}{(xs)_\infty}\right]_{s\in\partial C}=0.
    \end{equation}
    Then we have
    \begin{equation}
        [(1-T_x)(1-cq^{-1}T_x)+xT_x(1-aT_x)](ax/c)_\infty\int_C s^{\gamma-\alpha-1}\frac{(cqs/a)_\infty}{(cs)_\infty}\frac{1}{(xs)_\infty} \,d_qs=0.
    \end{equation}
\end{proposition}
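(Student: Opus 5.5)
We prove Proposition~\ref{prop:qhyp2} by tracking the inverse $q$-Laplace transform computation just described, with the parameters specialized. Set $d_1=cq/a$, $d_2=c$ and $e=c/a$, so that $\rho=\gamma-\alpha$ and $u(x)=x^{\gamma-\alpha}(cqx/a)_\infty/(cx)_\infty$. The first step is to verify directly that $u$ satisfies
\[
D_x^-u=\Bigl(\frac{A_1}{x}+\frac{A_2}{x-q/c}\Bigr)u .
\]
This amounts to computing $u(q^{-1}x)/u(x)=q^{-\rho}(1-cx/a)/(1-cx/q)$ and comparing the resulting partial fractions.

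Next we set $U=(u/x,\,u/(x-q/c))^T$, which satisfies the $q$-Okubo type equation \eqref{eq:downOkubo}. We then put
\[
W(x)=\int_C U(s)\,\frac{d_qs}{((1-q)xs)_\infty}.
\]
The key analytic step is to show that this Jackson integral realizes the operator transform $L_q^-$ of \eqref{eq:invqLaplaceop}; this gives $W$ as a solution of \eqref{eq:qKummerinverseLaplace}. The argument uses the following facts:
\begin{itemize}
\item the kernel $K(x,s)=1/((1-q)xs)_\infty$ satisfies $(1-(1-q)xs)K(x,s)=K(x,qs)$;
\item consequently, multiplication by $s$ under the integral corresponds to $D_x$, and $D_s^-$ corresponds to $-qx$;
\item the latter correspondence comes from $q$-summation by parts, $\int_C (D_s^-f)g\,d_qs=-\int_C f\,(D_sg)\,d_qs+\text{boundary}$.
\end{itemize}
The boundary term is exactly $[U(s/q)K(x,s)]_{s\in\partial C}$. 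With the rescaling $z=(1-q)x$, this is the hypothesis of the proposition, after the variable $x$ there is identified with $z$.

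Third, we eliminate the second component of the $2\times 2$ system \eqref{eq:qKummerinverseLaplace}. The first row gives $D_xW_1=(B_1)_{11}W_1/(qx)+(B_1)_{12}W_2/(qx)$, so we can solve for $W_2$ in terms of $W_1$ and $T_xW_1$. Substituting into the second row yields the second-order scalar equation
\[
[c^2T_z^2-c(c+q-qz)T_z+q(c-az)]W_1=0 .
\]
Here $W_1=\int_C s^{\gamma-\alpha-1}\frac{(cqs/a)_\infty}{(cs)_\infty}\frac{1}{(zs)_\infty}\,d_qs$, up to an overall constant.

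Finally, we apply the gauge transformation $W_1'=(az/c)_\infty W_1$. Using $T_z(az/c)_\infty=(az/c)_\infty/(1-az/c)$, we substitute $W_1=W_1'/(az/c)_\infty$ and multiply through by suitable factors of $(1-az/c)$. We then check that the resulting coefficients match those of $(1-T_z)(1-cq^{-1}T_z)+zT_z(1-aT_z)$ up to an overall nonzero factor, with the identification $x=z$.

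The main obstacle is bookkeeping. First, the boundary condition in the proposition has to be matched precisely with the one coming from the $q$-summation by parts. Second, the elimination and gauge step require tracking exactly how $q$-shifts act on $(az/c)_\infty$; in particular, one must determine whether $T_z$ or $T_z^{-1}$ produces the factor $(1-az/c)$. If the direct elimination becomes messy, we would instead check the final identity by comparing coefficients of $T_z^k$ after clearing denominators.
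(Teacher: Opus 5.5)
Your proposal is correct and follows the paper's proof step for step. You specialize $d_1=cq/a$, $d_2=c$, $e=c/a$, realize $L_q^-$ by the kernel $1/((1-q)xs)_\infty$, and eliminate $W_2$ via $(cT-a)W_1=(q-a)W_2$ to get $[c^2T_z^2-c(c+q-qz)T_z+q(c-az)]W_1=0$. After the gauge by $(az/c)_\infty$, you should divide by the common factor $c-az$ rather than multiply; this yields exactly $q$ times the $q$-Kummer operator.

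One small fix: the $q$-summation by parts is $\int_C (D_s^-f)\,g\,d_qs=-q\int_C f\,(D_sg)\,d_qs+q\,[f(s/q)g(s)]_{s\in\partial C}$. It is this factor $q$, together with $D_sK=xK$, that gives $D_s^-\mapsto -qx$ as you claim.
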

\begin{remark}
    The solution
    \begin{align}
        (ax/c)_\infty\int_C s^{\gamma-\alpha-1}\frac{(cqs/a)_\infty}{(cs)_\infty}\frac{1}{(xs)_\infty} \,d_qs
    \end{align}
    is a $q$-analog of
    \begin{align}
        e^{x}\int_C s^{\gamma-\alpha-1}(1-s)^{\alpha-1}e^{-xs}\,ds.
    \end{align}
\end{remark}

\subsection{Variant of singly confluent $q$-hypergeometric equation}
In \cite{matsunawa2019variants}, a variant of the $q$-Kummer equation \eqref{eq:qKummer} is introduced\footnote{
    This equation is constructed by calculating a confluent limit of a variant of the $q$-hypergeometric equation of degree two \cite{HMST2022}.
    Variants of the $q$-hypergeometric equation have a background of quantum integrable systems.
}:
\begin{align}
    &q^{ h_1+h_2-l_1-2\lambda +1/2}t_2(q^{l_1- 1/2}t_1-x)g(qx) \\
    & +(x-q^{h_1 + 1/2} t_1)(x-q^{h_2 + 1/2}t_2)g(x/q) \\
    &-[q^{\alpha _1}x^2 -q^{h_1+h_2-\lambda +1/2}(q^{-h_2}t_1+q^{-h_1}t_2+q^{-l_1}t_2)x 
     +q^{h_1+h_2-\lambda }(q + 1) t_1t_2]g(x)\\
    &=0.\label{eq:variant}
\end{align} 
We shall construct integral solutions for this equation.
First, we characterize the equation \eqref{eq:variant}.
\begin{proposition}\label{prop:characterizevariant}
    Let $H$ be a $q$-difference operator such that
    \begin{align}
        H=\sum_{i=0}^2\sum_{j=-1}^1 a_{i,j}x^iT_x^j\qquad(a_{i,j}\in\mathbb{C},\ a_{2,1}=0).
    \end{align}
    We suppose the following conditions:
    \begin{itemize}
        \item[1.] We rewrite $H=\sum_{i=0}^2 x^iL_i(T_x)$.
        Then $L_0(T_x)\propto(T_x-q^\lambda)(T_x-q^{\lambda+1})$, $L_1(T_x)\propto(T_x-q^\lambda)$ and $L_2(T_x)\propto(T_x-q^{-\alpha_1})$ hold.
        \item[2.] We rewrite $H=\sum_{j=-1}^1 P_j(x)T_x^j$.
        Then $P_{-1}(x)\propto(x-q^{h_1+1/2}t_1)(x-q^{h_2+1/2}t_2)$ and $P_1(x)\propto(x-q^{l_1-1/2}t_1)$ hold.
    \end{itemize}
    Then the equation $Hg(x)=0$ coincides with \eqref{eq:variant}.
\end{proposition}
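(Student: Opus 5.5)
The plan is a direct coefficient comparison. The two descriptions of $H$ pin down its nine coefficients $a_{i,j}$ (with $a_{2,1}=0$, so eight remain), up to one overall scalar. Write
\begin{equation}
L_i(T_x)=a_{i,-1}T_x^{-1}+a_{i,0}+a_{i,1}T_x .
\end{equation}
Since the factorized forms in condition 1 are polynomials in $T_x$, I interpret them after multiplying $L_i$ by $T_x$. Then $T_xL_0(T_x)$ is a quadratic polynomial in $T_x$ proportional to $(T_x-q^\lambda)(T_x-q^{\lambda+1})$. Likewise $T_xL_1$ must be proportional to $T_x(T_x-q^\lambda)$ or $(T_x-q^\lambda)$, and similarly for $L_2$. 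Fixing this bookkeeping convention is the one point where care is needed, and I will choose it so as to match the shape of \eqref{eq:variant}.

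\textbf{Step 1: fix the columns from condition 2.} Read off
\begin{equation}
P_{-1}(x)=a_{0,-1}+a_{1,-1}x+a_{2,-1}x^2,\qquad P_1(x)=a_{0,1}+a_{1,1}x .
\end{equation}
Condition 2 gives $a_{2,-1}=\kappa$, $a_{1,-1}=-\kappa(q^{h_1+1/2}t_1+q^{h_2+1/2}t_2)$ and $a_{0,-1}=\kappa q^{h_1+h_2+1}t_1t_2$. It also gives $a_{1,1}=-\mu$ and $a_{0,1}=\mu q^{l_1-1/2}t_1$, for some constants $\kappa,\mu$.

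\textbf{Step 2: fix the rows from condition 1, then solve.}
\begin{itemize}
\item $L_2$ has no $T_x$ term, so $L_2\propto T_x^{-1}(T_x-q^{-\alpha_1})$. This yields $a_{2,0}=-q^{\alpha_1}a_{2,-1}=-q^{\alpha_1}\kappa$.
\item $L_1$ has roots at $T_x=q^\lambda$ together with either $0$ or $\infty$. The case must be determined by whether $a_{1,1}$ or $a_{1,-1}$ vanishes. Both are nonzero generically, so $L_1$ actually needs two roots. I expect the intended reading is $L_1\propto (T_x-q^{\lambda})(T_x-\beta)T_x^{-1}$ with $\beta$ then forced. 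More likely, condition 1 is meant to impose only the listed root, $L_1(q^\lambda)=0$, and likewise $L_0(q^\lambda)=L_0(q^{\lambda+1})=0$ and $L_2(q^{-\alpha_1})=0$. Under that reading the conditions are linear equations in the $a_{i,j}$.
\item Solving them with $\kappa,\mu$ free, the two $L_0$ root conditions determine $a_{0,0}$ and relate $\mu$ to $\kappa$. Expanding $L_0(q^\lambda)=L_0(q^{\lambda+1})=0$ gives $a_{0,1}q^{2\lambda+1}=a_{0,-1}$, hence
\begin{equation}
\mu=\kappa\, q^{h_1+h_2-l_1-2\lambda+1/2}t_2 ,
\end{equation}
and $a_{0,0}=-(q^{\lambda}+q^{\lambda+1})a_{0,1}$. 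Setting $\kappa=1$, this gives $a_{0,0}=-q^{h_1+h_2-\lambda}(q+1)t_1t_2$.
\item The condition $L_1(q^\lambda)=0$ gives $a_{1,0}=-q^{-\lambda}a_{1,-1}-q^{\lambda}a_{1,1}$. This should reproduce the middle coefficient $q^{h_1+h_2-\lambda+1/2}(q^{-h_2}t_1+q^{-h_1}t_2+q^{-l_1}t_2)$.
\end{itemize}

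\textbf{Step 3: compare with \eqref{eq:variant}.} Finally, check that the resulting operator, with $\kappa=1$, coincides term by term with \eqref{eq:variant}. This is the main (purely computational) obstacle: verifying the $x^1T_x^0$ coefficient, where exponent bookkeeping with the half-integer shifts must match exactly. Up to the overall scalar, $H$ is then uniquely determined, so $Hg=0$ is \eqref{eq:variant}.
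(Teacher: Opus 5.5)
Your proposal is correct and follows essentially the paper's argument. The paper also reads condition~1 as root conditions (for $L_1$ only $L_1(q^\lambda)=0$, which fixes $a_{1,0}$ once $a_{1,-1}/a_{1,1}$ is known) and shows the conditions determine all $a_{i,j}$ up to one scalar. You solve the conditions explicitly instead, which makes the paper's separate check that \eqref{eq:variant} satisfies them unnecessary. The step you left open does close: $a_{1,0}=q^{h_1-\lambda+1/2}t_1+q^{h_2-\lambda+1/2}t_2+q^{h_1+h_2-l_1-\lambda+1/2}t_2=q^{h_1+h_2-\lambda+1/2}(q^{-h_2}t_1+q^{-h_1}t_2+q^{-l_1}t_2)$, as required.
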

\begin{proof}
    We write the equation \eqref{eq:variant} as $\mathcal{H}g(x)=0$.
    It can be easily checked that the operator $\mathcal{H}$ satisfies the conditions.
    On the other hand, the operator $H$ is determined by the conditions, up to an overall constant as follows.
    The condition 2 determines the ratios $a_{0,-1}/a_{1,-1}$, $a_{0,-1}/a_{2,-1}$, $a_{0,1}/a_{1,1}$.
    The condition 1 for $L_0$ and $L_2$ also determines $a_{0,-1}/a_{0,0}$, $a_{0,-1}/a_{0,1}$, $a_{2,-1}/a_{2,0}$.
    Since $a_{1,-1}/a_{1,1}$ is determined already, the condition 1 for $L_1$ determines $a_{1,-1}/a_{1,0}$.
    This means that each $a_{i,j}/a_{0,1}$ is determined.
\end{proof}

We construct integral solutions for \eqref{eq:variant} in the following.
We first show a construction via $\mathcal{L}_q$.
Put
\begin{align}
    u=u(x)=x^\rho \frac{(d_1x)_\infty}{(d_3x)_\infty(d_4x)_\infty},\ q^\rho=e.
\end{align}
Then we have
\begin{align}
    D_xu=\left(A_\infty+\frac{A_1}{x}+\frac{A_2}{x-d_1^{-1}}\right)u,
\end{align}
where
\begin{align}
   A_\infty=\frac{d_3d_4e}{(1-q)d_1},\quad  
   A_1=\frac{1-e}{1-q},\quad 
   A_2=\frac{e(d_1-d_3)(d_1-d_4)}{d_1^2(1-q)}.
\end{align}
We put
\begin{align}
    U=\left(\frac{u}{x},\frac{u}{x-d_1^{-1}}\right)^{T}.
\end{align}
Applying $L_q$, we get
\begin{align}
    D_x^-V=q\left(-S+\frac{B_1}{x-\tau_1}\right)V,
\end{align}
where $\tau_1=d_3d_4e/((1-q)d_1)$ and
\begin{align}
    V=\int_C U(s)(q(1-q)xs)_\infty \,d_qs,\ S=\begin{pmatrix}
        0&\\&d_1^{-1}
    \end{pmatrix},\ B_1=-\begin{pmatrix}
        A_1&A_2\\A_1&A_2
    \end{pmatrix}.
\end{align}
Here the path $C$ is taken so that $\left[U(s)((1-q)xs)_\infty\right]_{s\in\partial C}=0$.
Calculating the equation satisfied by the first component of $V$, we have
\begin{align}
    &H V_1=0,\\
    &H=(eqd_3d_4-d_1z)T_z^{-1}+(-(1+q)ed_3d_4+(d_1+e(d_3+d_4))z-z^2)+e(z-d_3)(z-d_4)T_z\\
    &\phantom{H}=-z^2(1-eT_z)-zT_z^{-1}(1-T_z)(d_1-e(d_3+d_4)T_z)+ed_3d_4T_z^{-1}(1-T_z)(q-T_z),\\
    &z=(1-q)qx.
\end{align}
We apply a gauge transformation: $y=\dfrac{z^\lambda}{(z/d_3)_\infty}V_1$.
Then we have
\begin{align}
    H' y=0,
\end{align}
where
\begin{align}
    H'&=(d_1z-eqd_3d_4)(z-qd_3)q^\lambda T_z^{-1}+qd_3(-(1+q)ed_3d_4+(d_1+e(d_3+d_4))z-z^2)\\ 
    &\qquad -d_3^2eq(z-d_4)q^{-\lambda}T_z\\
    &=z^2q^\lambda T_z^{-1}(d_1-d_3q^{1-\lambda}T_z)-q d_3z T_z^{-1}(q^\lambda-T_z)(d_1+ed_4-ed_3q^{-\lambda}T_z)\\
    &\qquad+ed_3^2d_4q^{1-\lambda}T_z^{-1}(q^\lambda-T_z)(q^{1+\lambda}-T_z).
\end{align}
Putting
\begin{align}
    d_1=q^{h_2-\alpha_1-\lambda+1/2}t_2,\ d_3=q^{h_2-1/2}t_2,\ d_4=q^{l_1-1/2}t_1,\ e=q^{h_1-l_1-\alpha_1-\lambda+1},
\end{align}
we find that the operator $H'$ satisfies the conditions 1 and 2 in Proposition \ref{prop:characterizevariant}.
Therefore we get an integral solution of \eqref{eq:variant}:
\begin{align}
    \frac{x^\lambda}{(x/(q^{h_2-1/2}t_2))_\infty}\int_C s^{h_1-l_1-\alpha_1-\lambda}\frac{(q^{h_2-\alpha_1-\lambda+1/2}t_2s)_\infty}{(q^{h_2-1/2}t_2s)_\infty(q^{l_1-1/2}t_1s)_\infty}(xs)_\infty \,d_qs.
\end{align}
Using $L_q^{-}$, we also construct an integral solution of \eqref{eq:variant}.
We put
\begin{align}
    u=x^\rho \frac{(d_1x)_\infty(d_2x)_\infty}{(d_3x)_\infty},\qquad q^\rho=e.
\end{align}
Then we get
\begin{align}
    D_x^-u=\left(A_\infty+\frac{A_1}{x}+\frac{A_2}{x-q/d_3}\right)u,
\end{align}
where
\begin{align}
    A_\infty=-\frac{d_1d_2}{(1-q)d_3e},\ A_1=\frac{(1-e)q}{(1-q)e},\ A_2=-\frac{q(d_1-d_3)(d_2-d_3)}{(1-q)d_3^2e}.
\end{align}
Putting 
\begin{align}
    U=\left(\frac{u}{x},\frac{u}{x-q/d_3}\right)^T,\ W=\int_C \frac{U(s)}{((1-q)xs)_\infty}\,d_qs,
\end{align}
where $\left[\dfrac{U(s/q)}{((1-q)xs)_\infty}\right]_{s\in\partial C}=0$, we have
\begin{align}
    &D_xW=\left(S+\frac{B_1/q}{x+\tau_1/q}\right)W,\\
    &S=\begin{pmatrix}
        0&0\\0&q/d_3
    \end{pmatrix},\ B_1=-\begin{pmatrix}
        A_1&A_2\\A_1&A_2
    \end{pmatrix},\ \tau_1=-\frac{d_1d_2}{(1-q)d_3e}.
\end{align}
By a simple calculation, we have
\begin{align}
    &HW_1=0,\\
    &H=q(z-d_1)(z-d_2)T_z^{-1}+(-eqz^2+qz(d_1+d_2+ed_3)-(1+q)d_1d_2)+(d_1d_2-eqd_3z)T_z\\
    &\phantom{H}=d_1d_2(1-T_z)(q-T_z)T_z^{-1}-qz(1-T_z)(d_1+d_2-ed_3T_z)T_z^{-1}+qz^2(1-eT_z)T_z^{-1},\\
    &z=(1-q)x,
\end{align}
where $W_1$ is the first component of $W$.
Comparing $H$ with the conditions in Proposition \ref{prop:characterizevariant}, we find that the equation satisfied by $x^\lambda W_1$ coincides with \eqref{eq:variant} by putting
\begin{align}
    d_1=q^{h_1+1/2}t_1,\ d_2=q^{h_2+1/2}t_2,\ d_3=q^{h_1+h_2-l_1-\alpha_1-\lambda+1/2}t_2,\ e=q^{\alpha_1+\lambda}.
\end{align}
Therefore we have the following.
\begin{proposition}\label{prop:qvarhyp}
    For suitable paths, the following integrals are solutions of the variant of the confluent $q$-hypergeometric equation \eqref{eq:variant}:
    \begin{align}
     &\frac{x^\lambda}{(x/(q^{h_2-1/2}t_2))_\infty}\int_C s^{h_1-l_1-\alpha_1-\lambda}\frac{(q^{h_2-\alpha_1-\lambda+1/2}t_2s)_\infty}{(q^{h_2-1/2}t_2s)_\infty(q^{l_1-1/2}t_1s)_\infty}(xs)_\infty \,d_qs,\\
     \label{eq:intsolvariant}&x^\lambda\int_C s^{\alpha_1+\lambda-1}\frac{(q^{h_1+1/2}t_1s)_\infty(q^{h_2+1/2}t_2s)_\infty}{(q^{h_1+h_2-l_1-\alpha_1-\lambda+1/2}t_2s)_\infty}\frac{1}{(xs)_\infty}\,d_qs.
\end{align}
\end{proposition}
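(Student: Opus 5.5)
The two integrals in Proposition~\ref{prop:qvarhyp} come from the two computations just carried out, one via $L_q$ and one via $L_q^{-}$. The plan is to turn each into a solution of \eqref{eq:variant} by specializing parameters and rescaling, and then to appeal to the uniqueness part of Proposition~\ref{prop:characterizevariant}.

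\emph{First integral.} Start from
\[
u(x)=x^{\rho}\frac{(d_1x)_\infty}{(d_3x)_\infty(d_4x)_\infty},
\qquad
V=\int_C U(s)\,(q(1-q)xs)_\infty\,d_qs .
\]
By the realization \eqref{eq:V(x)int} of $L_q$, $V$ solves $D_x^-V=q\bigl(-S+B_1/(x-\tau_1)\bigr)V$ whenever the boundary condition $[U(s)((1-q)xs)_\infty]_{s\in\partial C}=0$ holds. Eliminating the second component gives $HV_1=0$ in the variable $z=q(1-q)x$. The gauge factor $z^{\lambda}/(z/d_3)_\infty$ then produces $H'$. Next I would substitute the stated values of $d_1,d_3,d_4,e$ into $H'$ and check the two conditions of Proposition~\ref{prop:characterizevariant}:
\begin{itemize}
\item[1.] the $x^iT^j$ factorizations
\[
L_0\propto (T-q^{\lambda})(T-q^{\lambda+1}),\qquad
L_1\propto (T-q^{\lambda}),\qquad
L_2\propto (T-q^{-\alpha_1});
\]
\item[2.] the root structure of $P_{\pm1}(x)$, which requires $d_1q^{\lambda}=q^{h_1+h_2-\lambda+\dots}$-type identities after matching.
\end{itemize}
Reading these off from the factorized form of $H'$ shows that $H'$ is a constant multiple of the operator $\mathcal H$ of \eqref{eq:variant}. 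The integrand is $V_1=\int_C u(s)s^{-1}(zs)_\infty\,d_qs$. Renaming $z$ as $x$ and inserting the parameters gives exactly the first expression, since $s^{\rho-1}$ has $\rho-1=h_1-l_1-\alpha_1-\lambda$.

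\emph{Second integral.} Take the second $u$ and $W=\int_C U(s)/((1-q)xs)_\infty\,d_qs$. By \eqref{eq:W(x)int} this solves the inverse $q$-Laplace system under the corresponding boundary condition, and eliminating the second component gives $HW_1=0$ with $z=(1-q)x$. Conjugating by $x^{\lambda}$ sends $T_z\mapsto q^{-\lambda}T_z$ in the factorized form. With the stated $d_1,d_2,d_3,e$, I would verify conditions 1 and 2 for the operator annihilating $x^{\lambda}W_1$: the right-hand factorized form of $H$ makes $L_0,L_1,L_2$ visible, and the coefficients of $T^{\pm1}$ give $P_{\pm1}$. Proposition~\ref{prop:characterizevariant} then yields \eqref{eq:intsolvariant}, with $s^{e\text{-exponent}-1}=s^{\alpha_1+\lambda-1}$ after the change of variable $z\to x$.

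The hard part is the bookkeeping in condition 1. After the gauge, the roots $q^{\lambda},q^{\lambda+1},q^{-\alpha_1}$ must come out correctly, and the normalization $a_{2,1}=0$ must be respected. In particular the $z^2$ coefficient must contain no $T_z$ term, so I would first check that $H'$ (respectively $x^{\lambda}Hx^{-\lambda}$) can be multiplied by a power of $T_z$ on the left to land in the span $\{x^iT^j:\ 0\le i\le2,\ -1\le j\le1\}$ with no $x^2T$ term. I would also check that the linear factors match after possibly rescaling $x$ by a power of $q$, which can be absorbed into the choice of $d_i$. The phrase ``for suitable paths'' means $C$ is any path satisfying the boundary conditions above. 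Concrete examples such as $[0,\xi\infty]$ are handled exactly as in the $q$-Kummer case, and I would not verify them further.
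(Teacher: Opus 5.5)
Your plan matches the paper's proof. You realize $L_q$ (resp.\ $L_q^{-}$) by the Jackson integral and eliminate to get $HV_1=0$ (resp.\ $HW_1=0$). You then apply the gauge $z^{\lambda}/(z/d_3)_\infty$ (resp.\ conjugation by $z^{\lambda}$, i.e.\ $T_z\mapsto q^{-\lambda}T_z$), substitute the stated $d_i,e$, and conclude by the uniqueness in Proposition~\ref{prop:characterizevariant}; the exponents and prefactors you read off are correct.

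The one garbled spot is your description of condition 2. The identities actually needed are $eqd_3d_4/d_1=q^{h_1+1/2}t_1$, $qd_3=q^{h_2+1/2}t_2$, $d_4=q^{l_1-1/2}t_1$ for $H'$, and $d_1d_2/(eqd_3)=q^{l_1-1/2}t_1$ for the second operator, and all of them hold with the stated parameters. No extra rescaling of $x$ and no left multiplication by powers of $T_z$ is needed, since both operators already have $a_{2,1}=0$.
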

\begin{remark}
    In \cite{matsunawa2019variants}, several solutions for the equation \eqref{eq:variant} are given in terms of series of hypergeometric type.
    For example, the following series is a solution:
    \begin{align}
        \label{eq:MSTsol}x^\lambda {}_2\varphi_1\left(\begin{array}{c}
            q^{\lambda+\alpha_1} ,   q^{h_i+1/2}t_i/x\\
             q^{h_i-l_1+1}t_i/t_1
        \end{array};\frac{x}{q^{h_j-1/2}t_j}\right),
    \end{align}
    where $(i,j)=(1,2)$ or $(2,1)$ and
    \begin{align}
        {}_2\varphi_1\left(\begin{array}{c}
            a,b\\c
        \end{array};x\right)=\sum_{n=0}^\infty \frac{(a)_n(b)_n}{(c)_n(q)_n}x^n.
    \end{align}
    Using Heine's transformation, we have
    \begin{align}
        &x^\lambda {}_2\varphi_1\left(\begin{array}{c}
            q^{\lambda+\alpha_1} ,   q^{h_i+1/2}t_i/x\\
             q^{h_i-l_1+1}t_i/t_1
        \end{array};\frac{x}{q^{h_j-1/2}t_j}\right)\\
        &=x^\lambda \frac{(q^{\lambda+\alpha_1})_\infty(q^{h_i-h_j+1}t_i/t_j)_\infty}{(q^{h_i-l_1+1}t_i/t_1)_\infty(q^{-h_j+1/2}x/t_j)_\infty}{}_2\varphi_1\left(\begin{array}{c}
            q^{h_i-l_1+1-\lambda-\alpha_1}t_i/t_1,q^{-h_j+1/2}x/t_j\\
            q^{h_i-h_j+1}t_i/t_j
        \end{array};q^{\lambda+\alpha_1}\right)\\
        &=\frac{(q^{\lambda+\alpha_1})_\infty(q^{h_i-l_1+1-\lambda-\alpha_1}t_i/t_1)_\infty}{(1-q)(q^{h_i-l_1+1}t_i/t_1)_\infty(q)_\infty}(q^{h_j-1/2}t_j)^{\lambda+\alpha_1}\\
        &\qquad\times x^\lambda\int_0^{q^{-h_j+1/2}/t_j}s^{\alpha_1+\lambda-1}\frac{(q^{h_1+1/2}t_1s)_\infty(q^{h_2+1/2}t_2s)_\infty}{(q^{h_1+h_2-l_1-\alpha_1-\lambda+1/2}t_2s)_\infty}\frac{1}{(xs)_\infty}\,d_qs.
    \end{align}
    Our solution \eqref{eq:intsolvariant} coincides with \eqref{eq:MSTsol} up to a constant when $C=[0,q^{-h_j+1/2}/t_j]$.
\end{remark}

\section{Application to $q$-Painlev\'e equation of type $A_4^{(1)}$}\label{sec:qPainleve}

In this section, we apply $\mathcal{ML}_q$ to Murata's Lax linear equation \cite{Murata} for the $q$-Painlev\'e equation of type $A_4^{(1)}$:
\begin{align}
    \label{eq:murata}&T_x u= A(x)u,\\
    &A(x)=A(x,t)=A^{(0)}+A^{(1)}x+A^{(2)}x^2,\\
    &\det A(x)=\kappa_1\kappa_2 (x-a_1t)(x-a_2t)(x-a_3),\ 
    A^{(0)}\sim \begin{pmatrix}
        \theta_1 t&0\\
        0&\theta_2 t
    \end{pmatrix}, \ 
    A^{(2)}=\begin{pmatrix}
        \kappa_1&0\\
        0&0
    \end{pmatrix}.
\end{align}
The $q$-Painlev\'e equation of type $A_4^{(1)}$ can be written as the compatibility condition of \eqref{eq:murata} and a suitable deformation $T_t u=B(x,t)u$.
We construct a transform of \eqref{eq:murata} using $\mathcal{ML}_q$.
\begin{theorem}\label{Thm:qPA4}
    We apply the following transform to the equation \eqref{eq:murata} step by step:
    \begin{itemize}
        \item[1.] gauge transform $u\mapsto x^\rho (x/(a_1t))_\infty u$, where $q^\rho=\theta_1^{-1}t^{-1}$
        \item[2.] $q$-middle Laplace transform $\mathcal{ML}_q$
        \item[3.] gauge transform $u\mapsto x^\mu ((1-q)a_2tqx)_{\infty}^{-1}((1-q)a_3qx)_{\infty}^{-1}u$, where $q^\mu=\theta_2 t$
        \item[4.] scaling $z=(1-q)qa_2a_3tx$.
    \end{itemize}
    Then the transformed equation $T_zu=\bar{A}(z)u$ satisfies the following properties:
    \begin{align}
        &\bar{A}(z)=\bar{A}^{(0)}+\bar{A}^{(1)}z+\bar{A}^{(2)}z^2,\\
        &\det \bar{A}(z)=\kappa_1\kappa_2(z-a_1a_2a_3\kappa_1t/\theta_1)(z-a_2t)(z-a_3),
        \\
        &\bar{A}^{(0)}\sim \begin{pmatrix}
            a_2a_3\kappa_1t&\\&\theta_2t
        \end{pmatrix},
        \quad
        \bar{A}^{(2)}=\begin{pmatrix}
            \dfrac{\theta_1}{a_2a_3}&\\&0
        \end{pmatrix}.
    \end{align}
    In particular, a transform
    \begin{align}
      a_1\mapsto -\frac{\theta_2}{\kappa_2},\ \kappa_1\mapsto \frac{\theta_1}{a_2a_3},\ \kappa_2\mapsto-\frac{\theta_2}{a_1},\ \theta_1\mapsto a_2a_3\kappa_1
  \end{align}
  for the equation \eqref{eq:murata} is given by using $\mathcal{ML}_q$.
\end{theorem}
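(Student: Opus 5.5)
The plan is to follow the four steps one at a time. At each step I keep track of the system in the normal form \eqref{eq:Eqn} or \eqref{eq:downEqn} and read off its data: the diagonal term $A_\infty$, the residues, and the pole positions. The determinant and the leading and constant coefficients of $\bar A(z)$ can then be obtained from the eigenvalue data of the residues rather than by full computation.

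\textbf{Step 1: bring \eqref{eq:murata} to the form \eqref{eq:Eqn}.} After the gauge transform $\tilde u=x^\rho(x/(a_1t))_\infty u$ we have
\[
T_x\tilde u=q^\rho\Bigl(1-\tfrac{x}{a_1t}\Bigr)A(x)\,\tilde u .
\]
This alone is cubic in $x$, so I first rewrite \eqref{eq:murata} using $A(x)^{-1}$, or equivalently the adjugate divided by $\det A(x)$. I expect $A(a_1t)$ to have rank one, which allows a rational form with simple poles.

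\begin{itemize}
\item The target is $D_x\tilde u=(A_\infty+\sum_i A_i/(x-t_i))\tilde u$ with poles at $0$ and at $a_2t,\,a_3$ (or their reciprocal-type positions).
\item The choice $q^\rho=\theta_1^{-1}t^{-1}$ should make the residue at $x=0$ rank one, since $T_x\tilde u\sim q^\rho A^{(0)}\tilde u$ there.
\item The factor $(x/(a_1t))_\infty$ should remove the singular point $a_1t$.
\item The diagonal form of $A^{(2)}$ then gives a diagonal $A_\infty$, as required in \eqref{eq:Ainf}.
\item At each pole I record which eigenvalue data control $\dim\Ker A_i$. These dimensions are what determine the rank of the quotient $\bbC^{mn}/\calK$; I want this rank to stay $2$.
\end{itemize}

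\textbf{Step 2: apply $\mathcal{ML}_q$.} I use \eqref{eq:qmLeq} directly: $\bar S$ is diagonal with the old pole positions as entries, and the new poles sit at the eigenvalues $\tau_j$ of $A_\infty$. Following the decomposition in the proof of Theorem~\ref{thm:qmcinterp}, I choose a basis of the quotient in which $\bar S=\mathrm{diag}(t_1I_{k_1},\dots)$, and compute $\bar B_j=-E_{n_j}(\ldots)$ restricted to the quotient. Since the residues are rank one and the matrices are $2\times2$, this should be explicit.

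The resulting system has the form $D_x^-v=q(-\bar S+\sum_j\bar B_j/(x-\tau_j))v$. Rewritten as $T_x^{-1}v=(\cdots)v$, it is rational in $x$.

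\textbf{Steps 3 and 4: restore the polynomial Murata form.} Multiplying by $x^\mu((1-q)a_2tqx)_\infty^{-1}((1-q)a_3qx)_\infty^{-1}$ clears the denominators, and the scaling $z=(1-q)qa_2a_3tx$ normalizes the variable, giving $T_z u=\bar A(z)u$ with $\bar A$ quadratic.

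I then check the claimed data as follows.
\begin{itemize}
\item $\det\bar A$: its zeros come from the two factors $(1-q)a_2tqx$, $(1-q)a_3qx$ and from the new pole $\tau$. After scaling these give $z=a_2t$, $z=a_3$ and $z=a_1a_2a_3\kappa_1t/\theta_1$. The overall constant $\kappa_1\kappa_2$ is fixed by comparing determinants at $z=0$.
\item $\bar A^{(0)}$: it is similar to $\mathrm{diag}(a_2a_3\kappa_1t,\theta_2t)$. The entry $\theta_2t$ comes from $q^\mu$ together with the $x=0$ behaviour of $\bar S$; the entry $a_2a_3\kappa_1t$ comes from $\kappa_1$ transported by the Laplace transform.
\item $\bar A^{(2)}$: it should come out as $\mathrm{diag}(\theta_1/(a_2a_3),0)$, the rank-one structure being inherited from the rank-one residue fixed by the choice of $\rho$.
\end{itemize}

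Finally, the parameter correspondence $a_1\mapsto-\theta_2/\kappa_2$, $\kappa_1\mapsto\theta_1/(a_2a_3)$, $\kappa_2\mapsto-\theta_2/a_1$, $\theta_1\mapsto a_2a_3\kappa_1$ is read off by matching this data with \eqref{eq:murata}. For instance, the third root $a_1a_2a_3\kappa_1t/\theta_1$ equals $a_1't$ with $a_1'=-\theta_2/\kappa_2'$, and this identification should use the Murata relation among $\kappa_i,\theta_i,a_i$ implied by the determinant.

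\textbf{Main obstacle.} I expect the hardest part to be Step 1 together with verifying that the rank of the quotient stays $2$. This means showing that the specific $\rho$ makes exactly the right residues singular, so that $\dim\calK$ drops the rank from $2m$ back to $2$. It also requires tracking the normalization constants carefully enough to get $\bar A^{(2)}$ exactly rather than only up to scale. My first attempt would be to write $A(x)$ in Murata's explicit parametrization and compute the kernels of the residue matrices directly.
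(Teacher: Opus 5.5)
\textbf{There is a genuine gap: your Step 1 starts from a miscomputed shift, and the pole configuration built on it is wrong.} Since $(x/(a_1t))_\infty=(1-x/(a_1t))\,(qx/(a_1t))_\infty$, the gauge transform gives
$T_x\tilde u=\frac{A(x)}{\theta_1t\,(1-x/(a_1t))}\,\tilde u$, not $q^\rho(1-x/(a_1t))A(x)\tilde u$.

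This division by a linear factor vanishing at the zero $a_1t$ of $\det A$ is the whole mechanism. The coefficient now grows only linearly, so the $D_x$-form has constant term $A_\infty=\frac{a_1}{(1-q)\theta_1}A^{(2)}=\mathrm{diag}\bigl(\frac{a_1\kappa_1}{(1-q)\theta_1},0\bigr)$. It has exactly two simple poles:
at $0$, with residue $(I-A^{(0)}/(\theta_1t))/(1-q)$, rank one by the choice of $\rho$;
at $a_1t$, with residue $A(a_1t)/((1-q)\theta_1t)$, rank one because $\det A(a_1t)=0$.

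So $a_1t$ is not removed but becomes a pole, and $a_2t,a_3$ are not singular points of this system at all. No inversion of $A(x)$ is needed. Working with $A(x)^{-1}$ would put you in the $D_x^{-}$ setting of $\mathcal{ML}_q^{-}$, not $\mathcal{ML}_q$.

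Your predicted configuration, rank-one residues at $0,a_2t,a_3$, would give $m=3$ and a rank-$3$ quotient $\mathbb{C}^{6}/\mathcal{K}$. So the plan as written cannot produce a $2\times2$ Murata system. With the correct data, $mn=4$ and $\dim\mathcal{K}=2$, so the rank question you flag as the main obstacle is immediate. The transformed system then has poles at $\tau_1=\frac{a_1\kappa_1}{(1-q)\theta_1}$ and $0$, with $\bar S=\mathrm{diag}(0,a_1t)$.

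\textbf{The real content is what your Steps 3--4 take for granted.} Write the transformed system as $T_x^{-1}v=B(x)v$. You need to know why $B(qx)^{-1}$ has denominator exactly $(1-(1-q)a_2tqx)(1-(1-q)a_3qx)$. The points $a_2t,a_3$ reappear only as zeros of $\det B(x)$, at $x=1/((1-q)a_2t)$ and $x=1/((1-q)a_3)$. You never derive this, and it is not encoded in residue eigenvalues.

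The paper obtains it in three moves. It computes the determinant of the unreduced $4\times4$ matrix $I-(1-q^{-1})qx\bigl(-S+\frac{B_1}{x-\tau_1}+\frac{B_2}{x}\bigr)$. It reduces this by block operations to an explicit multiple of $\det A\bigl(1/((1-q)x)\bigr)$. It then divides out the factor $1-(1-q)a_1tx$ contributed by the $\mathcal{K}$-block, on which the $B_j$ act by zero and $S$ by $\mathrm{diag}(0,a_1t)$. This is exactly how the root at $a_1t$ disappears and how the constant in $\det\bar A$ is fixed.

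Your heuristics for the normalizations also need correcting.
\begin{itemize}
\item $B(0)=I+(1-q)\bar B_2$ has eigenvalues $1$ and $-a_1\kappa_2/\theta_1$, computed from $\det(\lambda I-B_2)$ and $(A_1)_{22}+(A_2)_{22}$.
\item Hence $\theta_2t$ is $q^\mu$ times the eigenvalue $1$ of $B(0)^{-1}$; nothing comes from the constant matrix $\bar S$.
\item The entry $a_2a_3\kappa_1t=-\theta_1\theta_2t/(a_1\kappa_2)$ comes from $\kappa_2$, via the relation $\theta_1\theta_2=-\kappa_1\kappa_2a_1a_2a_3$ read off from $\det A(0)$.
\item $\bar A^{(2)}$ comes from the adjugate of the leading coefficient $(1-q)\bar S/\tau_1=\mathrm{diag}\bigl(0,(1-q)^2\theta_1t/\kappa_1\bigr)$ of $(1-x/\tau_1)B(x)$.
\end{itemize}
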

\begin{proof}
    First, we consider the equation satisfied by $u'=x^\rho (x/(a_1t))_\infty u$, where $q^\rho=\theta_1^{-1}t^{-1}$.
    From the equation \eqref{eq:murata}, we have
    \begin{align}
        T_x u'=\frac{1}{\theta_1t(1-x/(a_1t))}A(x)u'.
    \end{align}
    We rewrite this equation by using $D_x$ as follows:
    \begin{align}
       \label{eq:murataDx} D_xu'=\left(A_\infty+\frac{A_1}{x}+\frac{A_2}{x-a_1t}\right)u'.
    \end{align}
    Here
    \begin{align}
        &A_1=\left(\frac{I-\frac{1}{\theta_1t(1-x/(a_1t))}A(x)}{(1-q)x}x\right)\bigg|_{x\to0}=\frac{I-\frac{1}{\theta_1t}A^{(0)}}{1-q},
        \\
        &A_2=\left(\frac{I-\frac{1}{\theta_1t(1-x/(a_1t))}A(x)}{(1-q)x}(x-a_1t)\right)\bigg|_{x\to a_1t}=\frac{\frac{1}{\theta_1t}A(a_1t)}{1-q},
        \\
        &A_\infty=\lim_{x\to \infty}\frac{I-\frac{1}{\theta_1t(1-x/(a_1t))}A(x)}{(1-q)x}=\frac{a_1}{\theta_1(1-q)}A^{(2)}=\begin{pmatrix}
            \frac{a_1\kappa_1}{\theta_1(1-q)}&0\\
            0&0
        \end{pmatrix}.
    \end{align}
    Next, we apply the $q$-middle Laplace transform $\mathcal{ML}_q$ to \eqref{eq:murataDx}.
    We calculate the invariant space 
    \begin{align}
        \mathcal{K}=\left\{\begin{pmatrix}
            v_1\\v_2
        \end{pmatrix}\mid v_j\in\Ker A_j\right\}.
    \end{align}
    Since $A^{(0)}\sim \begin{pmatrix}
        \theta_1t&0\\
        0&\theta_2t
    \end{pmatrix}$, we find that $\dim \Ker A_1=1$.
    We also have $\dim \Ker A_2=1$ due to $\det A(a_1t)=0$.
    Therefore, applying the $q$-middle Laplace transform $\mathcal{ML}_q$ to \eqref{eq:murataDx}, we have a $q$-difference system of rank $2$ in the following form:
    \begin{align}
        D_x^-v=q\left(-\bar{S}+\frac{\bar{B}_1}{x-a_1\kappa_1/((1-q)\theta_1)}+\frac{\bar{B}_2}{x}\right)v.
    \end{align}
    {The description of $\bar{S}$ and $\bar{B}_j$ is discussed later.}
    Rewriting this equation by using $T_x^{-1}$, we have
    \begin{align}
       & T_x^{-1}v=B(x)v,
        \\
    &B(x)=I-(1-q^{-1})qx\left(-\bar{S}+\frac{\bar{B}_1}{x-a_1\kappa_1/((1-q)\theta_1)}+\frac{\bar{B}_2}{x}\right).
    \end{align}
    Therefore, we get the equation
    \begin{align}
        \label{eq:murataLaplace}
        T_xv=B(qx)^{-1}v.
    \end{align}

    In the following, we calculate $B(qx)^{-1}$.
    We first consider 
    \begin{align}
        d=\det \left[I-(1-q^{-1})qx\left(-S+\frac{B_1}{x-\tau_1}+\frac{B_2}{x}\right)\right],
    \end{align}
    where $\tau_1=a_1\kappa_1/((1-q)\theta_1)$ and
    \begin{align}
        &S=\begin{pmatrix}
            0&&&\\&0&&\\&&a_1t&\\&&&a_1t
        \end{pmatrix},
        \ 
        B_1=-\begin{pmatrix}
            1&&&\\&0&&\\&&1&\\&&&0
        \end{pmatrix}\begin{pmatrix}
            A_1&A_2\\A_1&A_2
        \end{pmatrix},\\
       &B_2=-\begin{pmatrix}
            0&&&\\&1&&\\&&0&\\&&&1
        \end{pmatrix}\begin{pmatrix}
            A_1&A_2\\A_1&A_2
        \end{pmatrix}.
    \end{align}
    By a simple linear algebra calculation, we have
    \begin{align}
        d&=\det \biggl[\begin{pmatrix}
            1&&&\\
            &1&&\\
            -1&&1&\\
            &-1&&1
        \end{pmatrix}+(1-q)x\biggl(-S-\frac{1}{x-\tau_1}\begin{pmatrix}
            1&&&\\&0&&\\&&0&\\&&&0
        \end{pmatrix}\begin{pmatrix}
            A_1&A_2\\A_1&A_2
        \end{pmatrix}
        \\
        &\qquad\qquad\qquad\qquad\qquad\qquad\qquad\qquad\qquad\qquad-\frac{1}{x}\begin{pmatrix}
            0&&&\\&1&&\\&&0&\\&&&0
        \end{pmatrix}\begin{pmatrix}
            A_1&A_2\\A_1&A_2
        \end{pmatrix}\biggr)\biggr]
        \\
        &=\det \biggl[\begin{pmatrix}
            1&&1&\\
            &1&&1\\
            -1&&&\\
            &-1&&
        \end{pmatrix}+(1-q)x\biggl(-\begin{pmatrix}
            0&&a_1t&\\
            &0&&a_1t\\
            &&0&\\
            &&&0
        \end{pmatrix}\\
        &\qquad-\frac{1}{x-\tau_1}\begin{pmatrix}
            1&&&\\&0&&\\&&0&\\&&&0
        \end{pmatrix}\begin{pmatrix}
            A_1&A_2+(1-(1-q)a_1tx)A_1\\A_1&A_2+(1-(1-q)a_1tx)A_1
        \end{pmatrix}
        \\
        &\qquad-\frac{1}{x}\begin{pmatrix}
            0&&&\\&1&&\\&&0&\\&&&0
        \end{pmatrix}\begin{pmatrix}
            A_1&A_2+(1-(1-q)a_1tx)A_1\\A_1&A_2+(1-(1-q)a_1tx)A_1
        \end{pmatrix}\biggr)\biggr]
        \\
        &=\det \biggl[
        I-(1-q)x\biggl(
            \begin{pmatrix}
                a_1t&\\&a_1t
            \end{pmatrix}
            +\frac{1}{x-\tau_1}\begin{pmatrix}
                1&0\\0&0
            \end{pmatrix}(A_2+(1-(1-q)a_1tx)A_1)
            \\
            &\qquad\qquad\qquad\qquad\qquad\qquad\qquad
            +\frac{1}{x}\begin{pmatrix}
                0&0\\0&1
            \end{pmatrix}(A_2+(1-(1-q)a_1tx)A_1)
            \biggr)
        \biggr]
        \\
        &=\frac{1}{x-\tau_1}\frac{1}{x}\det \biggl[\begin{pmatrix}
            x-\tau_1&\\&x
        \end{pmatrix}-(1-q)x\biggl(
            \begin{pmatrix}
                a_1t(x-\tau_1)&\\&a_1tx
            \end{pmatrix}
            +(A_2+(1-(1-q)a_1tx)A_1)
            \biggr)\biggr].
    \end{align}
    Since
    \begin{align}
        \begin{pmatrix}
            x-\tau_1&\\&x
        \end{pmatrix}-(1-q)x
            \begin{pmatrix}
                a_1t(x-\tau_1)&\\&a_1tx
            \end{pmatrix}=(1-(1-q)a_1tx)\begin{pmatrix}
                x-\tau_1&\\&x
            \end{pmatrix}
    \end{align}
    and
    \begin{align}
        (1-q)x(A_2+(1-(1-q)a_1tx)A_1)=(1-(1-q)a_1tx)\left(\frac{A_1}{1/((1-q)x)}+\frac{A_2}{1/((1-q)x)-a_1t}\right),
    \end{align}
    we obtain
    \begin{align}
        d&=\frac{1}{x-\tau_1}\frac{1}{x}(1-(1-q)a_1tx)^2\det\left[x I-A_\infty-\frac{A_1}{1/((1-q)x)}-\frac{A_2}{1/((1-q)x)-a_1t}\right]
        \\
        &=\frac{1}{x-\tau_1}(1-(1-q)a_1tx)^2 x \det \left[\frac{1}{\theta_1t(1-1/((1-q)a_1tx))}A\left(\frac{1}{(1-q)x}\right)\right]
        \\
        &=\frac{((1-q)a_1)^2x^3}{(x-\tau_1)\theta_1^2}\det A\left(\frac{1}{(1-q)x}\right)
        \\
        &=\frac{a_1^2}{(1-q)\theta_1^2(x-\tau_1)}\kappa_1\kappa_2\left(1-(1-q)a_1tx\right)\left(1-(1-q)a_2tx\right)\left(1-(1-q)a_3x\right).
    \end{align}
    Second, we take nonzero vectors $\begin{pmatrix}
        v_{11}\\v_{21}
    \end{pmatrix}\in \Ker A_1$, $\begin{pmatrix}
        v_{12}\\v_{22}
    \end{pmatrix}\in \Ker A_2$, and consider
    \begin{align}
        \bar{d}=\det B(x)=\det \left[I-(1-q^{-1})qx\left(-\bar{S}+\frac{\bar{B}_1}{x-\tau_1}+\frac{\bar{B}_2}{x}\right)\right],
    \end{align}
    where
    \begin{align}
        &R=\begin{pmatrix}
            1&0&v_{11}&0\\
            0&0&v_{21}&0\\
            0&1&0&v_{12}\\
            0&0&0&v_{22}
        \end{pmatrix},\\
        &R^{-1}SR=\begin{pmatrix}
            \bar{S}&\\
            \ast&S'
        \end{pmatrix},\quad
        R^{-1}B_1R=\begin{pmatrix}
            \bar{B}_1&\\
            \ast&B_1'
        \end{pmatrix},\quad
        R^{-1}B_2R=\begin{pmatrix}
            \bar{B}_2&\\
            \ast&B_2'
        \end{pmatrix}.
    \end{align}
    Due to the definition of vectors $\begin{pmatrix}
        v_{11}\\v_{21}
    \end{pmatrix}$ and $\begin{pmatrix}
        v_{12}\\v_{22}
    \end{pmatrix}$, we have $B_1'=B_2'=O$.
    We also find $\bar{S}=S'=\begin{pmatrix}
        0&\\&a_1t
    \end{pmatrix}$.
    Therefore we have 
    \begin{align}
        d=\bar{d}(1-(1-q)a_1tx).
    \end{align}
    In particular, we find
    \begin{align}
        \bar{d}=\frac{a_1^2}{(1-q)\theta_1^2(x-\tau_1)}\kappa_1\kappa_2\left(1-(1-q)a_2tx\right)\left(1-(1-q)a_3x\right).
    \end{align}
    Third, we calculate eigenvalues of $B(0)$.
    From direct calculation, we get
    \begin{align}
        \det (\lambda I-B_2)=\lambda^3(\lambda+(A_1)_{22}+(A_2)_{22}).
    \end{align}
    It follows from definitions of $A_\infty$, $A_1$, $A_2$ that
    \begin{align}
        (A_1)_{22}+(A_2)_{22}=\lim_{x\to\infty}\left(\frac{1-A(x)_{22}/(\theta_1 t(1-x/(a_1t)))}{(1-q)}\right).
    \end{align}
    We remark that $(A_{\infty})_{22}=0$.
    Since $\det A(x)=\kappa_1\kappa_2(x-a_1t)(x-a_2t)(x-a_3)$ and $A(x)_{11}=\kappa_1x^2+\cdots$, we have $A(x)_{22}=\kappa_2x+\cdots$ and
    \begin{align}
        \lim_{x\to\infty}\left(\frac{1-A(x)_{22}/(\theta_1t(1-x/(a_1t)))}{(1-q)}\right)=\frac{1+a_1\kappa_2/\theta_1}{1-q}.
    \end{align}
    Thus we find
    \begin{align}
       \det (\lambda I-(I+(1-q)B_2))=(\lambda-1)^3(\lambda+\frac{a_1\kappa_2}{\theta_1}).
    \end{align}
    On the other hand, due to $B_2\sim \begin{pmatrix}
        \bar{B}_2&\\
        &O
    \end{pmatrix}$, we have
    \begin{align}
        \det (\lambda I-(I+(1-q)B_2))=(\lambda-1)^2\det (\lambda I-(I+(1-q)\bar{B}_2)).
    \end{align}
    In conclusion, we obtain 
    \begin{align}
        \det (\lambda I-(I+(1-q)\bar{B}_2))=(\lambda-1)(\lambda+\frac{a_1\kappa_2}{\theta_1}).
    \end{align}
    We summarize the above calculations:
    \begin{itemize}
        \item $\det B(x)=\dfrac{-a_1^2}{(1-q)\tau_1\theta_1^2(1-x/\tau_1)}\kappa_1\kappa_2\left(1-(1-q)a_2tx\right)\left(1-(1-q)a_3x\right)$
        \item $B(x)=\dfrac{1}{1-x/\tau_1}(B^{(0)}+B^{(1)}x+B^{(2)}x^2)$, $B^{(0)}\sim \begin{pmatrix}
            1&\\&-\dfrac{a_1\kappa_2}{\theta_1}
        \end{pmatrix}$, $B^{(2)}=\begin{pmatrix}
            0&\\
            &\dfrac{(1-q)^2\theta_1 t}{\kappa_1}
        \end{pmatrix}$
    \end{itemize}
    Therefore we have
    \begin{itemize}
        \item $\det B(qx)^{-1}=\dfrac{-\tau_1\theta_1^2(1-qx/\tau_1)(1-q)}{a_1^2\kappa_1\kappa_2}\left(1-(1-q)a_2tqx\right)^{-1}\left(1-(1-q)a_3qx\right)^{-1}$
        \item $B(qx)^{-1}=\dfrac{1}{(1-(1-q)a_2tqx)(1-(1-q)a_3qx)}(B'^{(0)}+B'^{(1)}x+B'^{(2)}x^2)$, 
        
        $B'^{(0)}\sim \begin{pmatrix}
            1&\\&-\dfrac{\theta_1}{a_1\kappa_2}
        \end{pmatrix}$, $B'^{(2)}=\begin{pmatrix}
            -\dfrac{(1-q)^2q^2t\theta_1^2}{a_1\kappa_1\kappa_2}&\\
            &0
        \end{pmatrix}$
    \end{itemize}
    We recall the equation \eqref{eq:murataLaplace}:
    \begin{align}
        T_xv=B(qx)^{-1}v.
    \end{align}
    Putting $v'=x^{\mu}((1-q)a_2tqx)_{\infty}^{-1}((1-q)a_3qx)_{\infty}^{-1}v$ where $q^{\mu}=\theta_2t$ and $z=(1-q)qa_2a_3tx$, we finally obtain
    \begin{align}
        &T_zv'= \bar{A}(z)v',\\
        &\bar{A}(z)=\bar{A}^{(0)}+\bar{A}^{(1)}z+\bar{A}^{(2)}z^2,\\
        &\det \bar{A}(z)=\kappa_1\kappa_2(z-a_1a_2a_3\kappa_1t/\theta_1)(z-a_2t)(z-a_3),\\
        &\bar{A}^{(0)}\sim \begin{pmatrix}
            a_2a_3\kappa_1t&\\&\theta_2t
        \end{pmatrix},\quad
        \bar{A}^{(2)}=\begin{pmatrix}
            \dfrac{\theta_1}{a_2a_3}&\\&0
        \end{pmatrix}.
    \end{align}
    This completes the proof.
\end{proof}
\begin{remark}
    The transform
    \begin{align}
      s_{\mathcal{ML}_q}:a_1\mapsto -\frac{\theta_2}{\kappa_2},\ \kappa_1\mapsto \frac{\theta_1}{a_2a_3},\ \kappa_2\mapsto-\frac{\theta_2}{a_1},\ \theta_1\mapsto a_2a_3\kappa_1
    \end{align}
    is a reflection, i.e. $s_{\mathcal{ML}_q}^2=\mathrm{id}$.
    We also find braid relations:
    \begin{align}
        &s_{\mathcal{ML}_q}s_{a_1,a_2}s_{\mathcal{ML}_q}=s_{a_1,a_2}s_{\mathcal{ML}_q}s_{a_1,a_2},\\
        &s_{\mathcal{ML}_q}s_{\theta_1,\theta_2}s_{\mathcal{ML}_q}=s_{\theta_1,\theta_2}s_{\mathcal{ML}_q}s_{\theta_1,\theta_2},
    \end{align}
    where $s_{b,c}:b\leftrightarrow c$.
\end{remark}
\begin{remark}
    In \cite{Murata}, an explicit expression of the matrix $A(x)$ in  \eqref{eq:murata} is constructed:
    \begin{align}
       & A(x)=\begin{pmatrix}
            \kappa_1 ((x-y)(x-\alpha)+z_1)&w (x-y)\\
            \kappa_1 w^{-1}(\gamma x+\delta)&\kappa_2(x-y)+z_2
        \end{pmatrix},\\
        &\alpha=\frac{1}{\kappa_1}(y^{-1}((\theta_1+\theta_2) t-\kappa_1 z_1-z_2)+\kappa_2),\\
        &\gamma=z_2-\kappa_2(2y+\alpha-t(a_1+a_2)-a_3),\\
        &\delta=y^{-1}(-\kappa_2a_1a_2a_3t^2-(\alpha y+z_1)(-\kappa_2y+z_2)),
    \end{align}
    where
    \begin{align}
        z_1z_2=\kappa_2 (y-a_1t)(y-a_2t)(y-a_3).
    \end{align}
    The parameters $y$ and $z_1$ play roles of the accessory parameters, and $w$ is a gauge freedom.
    By the transformation in Theorem \ref{Thm:qPA4}, the accessory parameters $y$ and $z_1$ are transformed as follows:
    \begin{align*}
			&y\mapsto s_{\mathcal{ML}_q}(y)=-\frac{a_2 a_3 \kappa_1 y
				\left(a_2 a_3
				t-a_2 t y-a_3
				y+y^2-z_1\right)}{a_2
				a_3 \kappa_1
				z_1-a_2 a_3
				\theta_1 t+a_2
				\theta_1 t y+a_3
				\theta_1 y-\theta_1 y^2},\\
			&z_1\mapsto s_{\mathcal{ML}_q}(z_1)=\frac{a_2^2 a_3^2
				\kappa_1 z_1
				Z_1 Z_2}{\theta_1
				\left(a_2a_3\kappa_1z_1-a_2 a_3
				\theta_1 t+a_2
				\theta_1 t y+a_3
				\theta_1 y-\theta_1 y^2\right)^2},
			\\
			&Z_1=\left(-a_2^2 \kappa_1 t
			y+a_2 \kappa_1 y^2-a_2
			\kappa_1 z_1+a_2
			\theta_1 t-\theta_1 y\right),
			\\
			&Z_2=\left(-a_3^2
			\kappa_1 y+a_3 \kappa_1
			y^2-a_3 \kappa_1 z_1+a_3 \theta_1 t-\theta_1 t
			y\right).
		\end{align*}
These can be checked by a direct calculation.
We remark that the above transformation 
\begin{align}
&(a_1,a_2,a_3,\kappa_1,\kappa_2,\theta_1,\theta_2,y,z_1)\\
&\mapsto (-\frac{\theta_2}{\kappa_2},a_2,a_3,\frac{\theta_1}{a_2a_3},-\frac{\theta_2}{a_1},a_2a_3\kappa_1,\theta_2,s_{\mathcal{ML}_q}(y),s_{\mathcal{ML}_q}(z_1))
\end{align}
is a birational involution.
\end{remark}
In \cite{SasakiTakagiTakemura2022}, a symmetry of Jimbo--Sakai's Lax equation \cite{JimboSakai1996} for the $q$-Painlev\'e equation of type $D_5^{(1)}$ is described by using the $q$-middle convolution by Sakai--Yamaguchi.
In \cite{Nobukawa2026}, a rank $3$ $q$-difference equation admitting $W(E_8^{(1)})$-symmetry is obtained and a symmetry of this system is written by $mc_\lambda$.
For differential cases, to describe symmetries of equations via integral transforms (Euler-type, Laplace-type, etc.) is standard (cf. \cite{Filipuk2006,HaraokaFilipuk2007,Yamakawa2016}).
It would also be interesting and important to give a birational representation of $W(A_4^{(1)})$ from $\mathcal{ML}_q$ and compare it with the well-known symmetry of the $q$-Painlev\'e equation \cite{KNY2017}.

\section{Concluding remarks}\label{sec:summary}

\subsection{Summary}
In this paper, we constructed the $q$-middle Laplace transform $\mathcal{ML}_q$. 
We then discussed its invertibility and preservation of irreducibility (Theorem \ref{thm:qMLproperty}), a decomposition of $q$-middle convolution in terms of $\mathcal{ML}_q$ (Theorem \ref{thm:qmcinterp}), applications to the confluent $q$-hypergeometric equation (Propositions \ref{prop:qhyp1} and \ref{prop:qhyp2}) and its variant (Proposition \ref{prop:qvarhyp}), and an application to the $q$-Painlev\'e equation of type $A_4^{(1)}$ (Theorem \ref{Thm:qPA4}).

We mention a related topic.
We give a transformation of the Lax linear equation \eqref{eq:murata} for the $q$-Painlev\'e equation of type $A_4^{(1)}$ using $\mathcal{ML}_q$.
The operation $\mathcal{ML}_q$ can be applied to the Lax linear equation for the $q$-Painlev\'e equation of type $D_5^{(1)}$.
We consider Jimbo--Sakai's Lax linear equation \cite{JimboSakai1996}:
\begin{align}
    &T_xu=A(x)u,\\
    &A(x)=A(x,t)=A^{(0)}+A^{(1)}x+A^{(2)}x^2,\\
    &\det A(x)=\kappa_1\kappa_2 (x-a_1t)(x-a_2t)(x-a_3)(x-a_4),\\
    &A^{(0)}\sim \begin{pmatrix}
        \theta_1 t&0\\0&\theta_2 t
    \end{pmatrix},\ 
    A^{(2)}=\begin{pmatrix}
        \kappa_1&0\\0&\kappa_2
    \end{pmatrix}.
\end{align}
Applying the gauge transform $u\mapsto x^\rho (x/(a_1t))_\infty(x/a_3)_\infty u$ where $q^\rho=\theta_1^{-1}t^{-1}$ and $\mathcal{ML}_q$, we get an equation of the following form:
\begin{align}
    &D_x^-u=\left(B_0+\frac{B_1}{x}\right)u, \\
    &B_0=-q\begin{pmatrix}
        0&&\\
        &ta_1&\\
        &&a_3
    \end{pmatrix},
    \quad
    B_1\sim q\begin{pmatrix}
        0&&\\
        &\dfrac{a_1a_3\kappa_1-\theta_1}{(1-q)\theta_1}&\\
        &&\dfrac{a_1a_3\kappa_2-\theta_1}{(1-q)\theta_1}
    \end{pmatrix},
\end{align}
which is equivalent to Kakei--Kikuchi's Lax linear equation \cite{KakeiKikuchi}.
In particular, we can construct a transformation between Jimbo--Sakai's equation and Kakei--Kikuchi's one in terms of $\mathcal{ML}_q$.

\subsection{Future work}
\label{sec:futurework}
We outline three directions for future work.

\begin{enumerate}[(i)]

\item As suggested by Remark~\ref{rem:Harnad}, the construction of the $q$-middle Laplace transform
in this paper leads naturally to a formulation
of $q$-Harnad duality. 
In the differential setting, Harnad \cite{Harnad1994}
established the compatibility of Harnad duality with
isomonodromic deformations.
Woodhouse \cite{Woodhouse2007} extended this duality
to a broader class of isomonodromic deformation problems.
Haraoka and Filipuk \cite{HaraokaFilipuk2007}
showed that middle convolution preserves the deformation
equations of Fuchsian systems. 
For certain classes of systems with unramified irregular
singularities, Yamakawa \cite{Yamakawa2016} established
the compatibility of both Harnad duality and middle convolution
with isomonodromic deformations.

Two related problems arise in the $q$-difference setting.
One is to formulate $q$-Harnad duality in a more general setting.
The other is to establish $q$-analogs of the above results, namely, the compatibility of the
$q$-middle Laplace transform and $q$-middle convolution
with connection-preserving deformations.
We will address these problems in a subsequent paper
with Kazuki Hiroe.
That paper will also provide a detailed account of the
identification between the middle Laplace transform and the
Harnad dual functor in the differential setting,
as stated in Remark~\ref{rem:Harnad}.

\item Sakai and Yamaguchi \cite{SakaiYamaguchi}
defined spectral types and the index of rigidity for
Fuchsian $q$-difference systems and showed that
$q$-middle convolution preserves the index of rigidity
under suitable conditions. 
Although $q$-middle convolution preserves Fuchsian type,
the $q$-middle Laplace transform does not necessarily preserve it. 
Kawakami \cite{Kawakami2023} extended the notion of
spectral type to non-Fuchsian $q$-difference systems.
In the differential setting, he also studied correspondences
between spectral types induced by the Laplace transform
in connection with Harnad duality \cite{Kawakami2025}. 

These developments suggest two related problems.
One is to extend the rigidity index to non-Fuchsian
$q$-difference systems and investigate its behavior
under the $q$-middle Laplace transform.
The other is to establish $q$-analogs of the correspondences
between spectral types studied by Kawakami, using the
$q$-middle Laplace transform and, more generally,
the $q$-Harnad duality mentioned above.

\item Haraoka \cite{Haraoka2012} extended middle convolution
to linear Pfaffian systems with logarithmic singularities
along hyperplane arrangements.
The first author \cite{Adachi2026arXiv} introduced the
middle Laplace transform for linear Pfaffian systems
admitting irregular singularities.
It would be interesting to extend $q$-middle convolution
and the $q$-middle Laplace transform to systems of
$q$-difference equations in several variables and apply
them to multivariable $q$-special functions.

\end{enumerate}


\section*{Acknowledgements}
The authors thank Yasuhiko Yamada for useful discussions and advice. 
The authors also thank Kazuki Hiroe for reading the first draft and valuable comments. 
Most of this work was carried out when the second author was a member of the Department of Education in Kogakkan University.
He would like to thank the staff at Kogakkan University for thoughtful support.


\bibliographystyle{abbrv}
\bibliography{references}

\end{document}